\numberwithin{equation}{section}
\newtheorem{theorem}{Theorem}[section]
\newtheorem*{ThmA}{Theorem A}
\newtheorem*{ThmB}{Theorem B}
\newtheorem*{ThmC}{Theorem C}
\newtheorem*{ThmD}{Theorem D}
\newtheorem*{ThmE}{Theorem E}
\newtheorem{corollary}[theorem]{Corollary}
\newtheorem{definition}[theorem]{Definition}
\newtheorem{example}[theorem]{Example}
\newtheorem{lemma}[theorem]{Lemma}
\newtheorem{proposition}[theorem]{Proposition}
\theoremstyle{definition}
\newtheorem{remark}[theorem]{Remark}
{\qed\bigskip}
\newcounter{alphabet}
\newcounter{tmp}
\newcounter{minutes}\setcounter{minutes}{\time}
\newcounter{hours}\setcounter{hours}{\time}
\newcommand{\C}{{\mathbb C}}
\newcommand{\D}{{\Delta}}
\renewcommand{\Re}{{\operatorname{Re}\,}}
\newcommand{\hol}{{\operatorname{Hol}}}
\newcommand{\J}{{\mathcal J}}
\newcommand{\N}{{\mathcal N}}
\newcommand{\es}{{\mathcal S}}
\newcommand{\G}{{\mathcal G}}
\renewcommand{\ae}{{\text{a.e.~}}}
\begin{document}

\title[Nonlinear resolvent of holomorphic generators]%
{Geometric properties of the nonlinear resolvent of holomorphic generators}


\author[M. Elin]{Mark Elin}
\address{ORT Braude College,
P.O. Box 78, Karmiel 21982, Israel}
\email{mark$\_$elin@braude.ac.il}
\author[D. Shoikhet]{David Shoikhet}
\address{Holon Institute of Technology}
\email{davidsho@hit.ac.il}
\author[T. Sugawa]{Toshiyuki Sugawa}
\address{Graduate School of Information Sciences,
Tohoku University, Aoba-ku, Sendai 980-8579, Japan}
\email{sugawa@math.is.tohoku.ac.jp}

\keywords{nonlinear resolvent, semigroup generator, hyperbolically convex,
inverse L{\"o}wner chain, boundary regular fixed point}
\subjclass[2010]{Primary 30C80; Secondary 30C45, 30C62, 47H10}
\begin{abstract}
Let $f$ \ be the infinitesimal generator of a one-parameter semigroup $%
\left\{ F_{t}\right\} _{t\ge0}$ of holomorphic self-mappings of the open unit
disk $\D$.
In this paper we study properties of the family $R$ of resolvents
$(I+rf)^{-1}:\D\to\D~ (r\ge0)$
in the spirit of geometric function theory.
We discovered, in particular, that $R$ forms an inverse L\"owner chain of
hyperbolically convex functions.
Moreover, each element of $R$ satisfies
the Noshiro-Warschawskii condition  and is a starlike function of order at least $\frac12$\,.
This, in turn, implies that each element of
$R$ is also a holomorphic generator.
We mention also quasiconformal extension of an element of $R.$
Finally we study the existence of repelling fixed points of this family.
\end{abstract}
\thanks{
T.~S.~was supported in part by JSPS KAKENHI Grant Number JP17H02847.
}
\maketitle

\section{Introduction}

We denote by $\hol(D_1,D_2)$ the set of holomorphic mappings of a domain $D_1$ into another $D_2.$
If $D$ is a domain in $\C$, then the set $\hol(D) := \hol(D,D)$ forms a semigroup with
composition being the semigroup operation.

\begin{definition}
\label{def G1} A family $\{F_{t}\}_{t\geq 0}$ of functions in $\hol(D )$
is called a one-parameter continuous semigroup on $D$ if the following conditions hold:
\begin{enumerate}
\item[1)] $F_t(z)$ converges to $z$ uniformly on each compact subset of $D$
as $t\to0^+$ and
\item[2)] $F_{t}(F_{s}(z))=F_{t+s}(z)$, whenever $t,s\geq 0$ and $z\in D $.
\end{enumerate}
\end{definition}

It is well known as the Berkson-Porta Theorem  \cite{B-P} that the limit%
\begin{equation*}
\lim_{t\rightarrow 0^{+}}\frac{1}{t}(z-F_{t}(z))=:f(z)
\end{equation*}%
exists with $f\in \hol(D,\C)$ in the topology of locally uniform convergence on $D$ and that $F_t(z)$ is reproduced by $u(t)=F_t(z),$ where $u(t)$ is the solution to the initial value problem of the ODE
\begin{equation*}
\left\{
\begin{array}{c}
\dfrac{du}{dt}+f(u(t))=0, \quad t\ge0,\\
\\
u(0)=z,%
\end{array}%
\right.
\end{equation*}%
(see also \cite{SD} and \cite{E-S-book}).
This function $f$ is called the \textit{(infinitesimal) generator }of the semigroup $\{F_{t}\}_{t\geq 0}$. The set of generators $f$ on $D$ arising in this way will be denoted by $\mathcal{G}(D) $.
We note, in particular, that for some $z_0\in D,$ the equality $F_t(z_0)=z_0$  holds for all $t\ge0$
if and only if $f(z_0)=0.$

The following fact was proved in \cite{RS-SD-98B} (see also \cite{R-S1}).

\begin{ThmA}
\label{teor GA} Let $D$ be a bounded convex domain in $\C$. A function $f\in \hol(D,\C)$ belongs to the class
$\mathcal{G}(D) $ if and only if for every $r\geq 0$ and every $w\in D $ the equation%
\begin{equation}
z+rf(z)=w  \label{G*}
\end{equation}%
has a unique solution $z=\mathcal{J}_{r}(w)\left( =\left( I+rf\right)
^{-1}\left( w\right) \right) $ in $D.$
Moreover, $\J_r(w)$ is holomorphic in $w\in D $.
\end{ThmA}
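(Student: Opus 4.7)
The plan is to reformulate \eqref{G*} as a zero-finding problem for an auxiliary semigroup generator and then to exploit the flow it determines. Fix $r>0$ (the case $r=0$ is trivial) and $w\in D$, and set
\[
g_w(z):=\frac{z-w}{r}+f(z),\qquad z\in D,
\]
so that \eqref{G*} is equivalent to $g_w(z)=0$, i.e.\ to locating a stationary point in $D$ of the candidate semigroup associated with $g_w$.

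For the ``only if'' direction, assume $f\in\mathcal{G}(D)$. I would first verify that $g_w\in\mathcal{G}(D)$: the linear summand $z\mapsto(z-w)/r$ generates the strongly contractive flow $z\mapsto w+(z-w)e^{-t/r}$ toward the interior point $w$, and $\mathcal{G}(D)$ is closed under addition on bounded convex domains (a standard structural property, cf.\ \cite{R-S1}). Writing $\{G_t\}$ for the associated semigroup, its trajectories stay in $D$ and admit a Denjoy--Wolff attractor in $\overline{D}$. The strongly inward linear part forces this attractor into $D$, for otherwise a sequence $z_n\to z_\infty\in\partial D$ with $g_w(z_n)\to 0$ would give $f(z_n)\to -(z_\infty-w)/r$, a vector with strictly negative outward normal component at $z_\infty$, contradicting the boundary flux characterization of generators on convex domains. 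This yields a zero $z_*\in D$ of $g_w$; uniqueness follows from the Denjoy--Wolff dichotomy, since two distinct interior fixed points of a holomorphic semigroup on a bounded convex domain force the semigroup to be the identity. Setting $\mathcal{J}_r(w):=z_*$, holomorphy of $w\mapsto\mathcal{J}_r(w)$ then follows from the implicit function theorem applied to $H(z,w):=z+rf(z)-w$, once one checks $1+rf'(z)\neq 0$ on $D$ — itself a consequence of $f\in\mathcal{G}(D)$.

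For the converse direction, assume the resolvents $\mathcal{J}_r:D\to D$ exist and are holomorphic for each $r\geq 0$. The plan is to invoke the nonlinear Crandall--Liggett exponential formula
\[
F_t(w):=\lim_{n\to\infty}\mathcal{J}_{t/n}^{\,n}(w),
\]
and to verify successively: (i) local uniform existence of the limit, using Montel's theorem together with the Schwarz--Pick non-expansiveness of holomorphic self-maps of $D$ in the hyperbolic metric; (ii) the semigroup and continuity axioms of Definition~\ref{def G1}; and (iii) that the infinitesimal generator of $\{F_t\}$ coincides with $f$, via the termwise expansion $\mathcal{J}_{t/n}(z)=z-(t/n)f(z)+O((t/n)^2)$ combined with a discrete-to-continuous limit argument.

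The chief technical obstacle I anticipate is the trapping step in the forward direction, namely ruling out that the Denjoy--Wolff attractor of $\{G_t\}$ escapes to $\partial D$. On a general bounded convex domain (whose boundary need not be smooth), a quantitative form of the boundary flux condition for generators is required; this is typically obtained by hyperbolic-metric estimates or by exhausting $D$ with smoother convex subdomains, and it constitutes the real content of the Reich--Shoikhet characterization used throughout. Once this interior-attractor fact is secured, the remaining ingredients — uniqueness of $z_*$, holomorphy of $\mathcal{J}_r$, and convergence in the Crandall--Liggett formula — are, by now, standard.
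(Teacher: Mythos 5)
Your proposal takes a genuinely different route from the paper, and in its present form it has a circularity at its central step. First, for orientation: the paper does not reprove Theorem A --- it quotes it from \cite{RS-SD-98B} --- and the only proof it supplies is Lemma \ref{sufficient}, which settles existence and uniqueness on the disk by a short argument-principle computation. Writing $g(z)=z+rf(z)-w$ as in \eqref{eq:g}, one checks that $\Re\big[g(z)\overline z\big]>0$ on a circle $|z|=t$ close to $\partial\D$; hence $g(z)/z$ takes values in the right half-plane there, its winding number about the origin is zero, and the number of zeros of $g$ in $|z|<t$ equals the number of poles of $g(z)/z$, namely one. This is shorter and quantitatively cleaner than the Denjoy--Wolff trapping you propose, and it delivers uniqueness for free.

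The concrete gaps in your argument are these. (1) The trapping step is resolved by appealing to ``the boundary flux characterization of generators on convex domains,'' i.e.\ to the Reich--Shoikhet characterization --- but that characterization \emph{is} Theorem A (or is proved by the same circle of ideas), so the forward direction as written is circular. Moreover, your intermediate claim that a boundary Denjoy--Wolff point $z_\infty$ of the flow of $g_w$ yields a sequence $z_n\to z_\infty$ with $g_w(z_n)\to0$ is itself nontrivial: on $\D$ it follows from the Berkson--Porta representation $g_w(z)=(z-\tau)(1-\overline\tau z)p(z)$ together with the growth bound $|p(z)|\le|p(0)|(1+|z|)/(1-|z|)$ inside a Stolz angle, but on a general bounded convex domain (non-smooth boundary, no Berkson--Porta formula) you have no such tool, and that is exactly the generality Theorem A claims. (2) The assertion that $1+rf'(z)\ne0$ \emph{on all of} $D$ is false: for instance $f(z)=z(1+z)/(1-z)\in\N$ has $f'(x)\to-\tfrac12$ as $x\to-1^+$, so $1+rf'$ vanishes in $\D$ for suitable $r$. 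What is true, and all the implicit function theorem needs, is non-vanishing at the solution point $z_*=\J_r(w)$; the paper obtains this in the proof of Theorem \ref{thm:NW} by applying Berkson--Porta to $g$ at its \emph{interior} null point, giving $g'(z_*)=(1-|z_*|^2)p(z_*)$ with $\Re p(z_*)>0$, whence \eqref{eq:J'}. (3) The converse via the exponential formula is circular relative to Theorem B as stated (Theorem B presupposes $f\in\G$), and proving convergence of $\J_{t/n}^n$ from scratch is a Crandall--Liggett-type theorem of comparable difficulty to the statement you want. A cleaner converse on the disk: $z-\J_r(z)$ is a generator for \emph{any} holomorphic self-map $\J_r$ of $\D$, hence so is $r^{-1}\big(z-\J_r(z)\big)=f\big(\J_r(z)\big)$; letting $r\to0^+$, using $\J_r\to\mathrm{id}$ locally uniformly and the closedness of $\G(\D)$ under locally uniform limits, gives $f\in\G(\D)$ with no iteration at all.
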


\bigskip

This solution is called the \textit{nonlinear resolvent }of $f$.
A proof of the existence of $\J_r$ will be given in Section \ref{sec:Proof}
under a stronger assumption.
Various properties of the nonlinear resolvent, like resolvent identities, asymptotic
behaviour, {\it e.t.c.} can be found in the books \cite{R-S1} (for Banach spaces) and
\cite{SD} (for the one dimensional case). In particular, the following
exponential formula holds.

\begin{ThmB}
\label{teorGB} Let $f$ be the generator of a one-parameter semigroup $\{F_{t}\}_{t\geq 0}$ of holomorphic self-mappings of  a bounded convex domain  $D $ and let
\begin{equation*}
\mathcal{J}_{r}=(I+rf)^{-1}\quad (r\ge0)
\end{equation*}%
be the resolvent family of $f$. Then for each $t\geq 0$%
\begin{equation*}
\lim_{n\rightarrow \infty }\mathcal{J}_{\frac{t}{n}}^{n}=F_{t}\text{.}
\end{equation*}
\end{ThmB}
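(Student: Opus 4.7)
My plan is to adapt the classical Chernoff/Trotter--Kato style argument, comparing the backward-Euler iterates $\J_{t/n}^{\,n}$ with the exact semigroup iterates $F_{t/n}^{\,n}=F_t$ via the non-expansiveness of holomorphic self-maps of $D$ in the Kobayashi distance $k_D$.

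First I would derive short-time asymptotics. The defining relation $\J_h(z)+hf(\J_h(z))=z$ (Theorem~A) together with the boundedness of $D$ yields $|\J_h(z)-z|=h|f(\J_h(z))|=O(h)$ locally uniformly, so $\J_h(z)\to z$. Expanding $f$ about $z$ and substituting once gives
\[
\J_h(z) \;=\; z-hf(z)+O(h^2)
\]
locally uniformly on $D$, while a Taylor expansion of the Berkson--Porta ODE produces the parallel estimate $F_h(z)=z-hf(z)+O(h^2)$. Subtracting,
\[
\J_h(z)-F_h(z) \;=\; O(h^2) \qquad \text{uniformly on compact subsets of } D.
\]

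Second I would telescope the global error using non-expansiveness. Since $\J_h\in\hol(D,D)$, it is non-expansive for $k_D$, and writing $y_k:=F_h^{\,k}(z)$ one has
\[
k_D\!\left(\J_h^{\,n}(z),\,F_h^{\,n}(z)\right) \;\le\; \sum_{k=1}^{n} k_D\!\left(\J_h(y_{k-1}),\,F_h(y_{k-1})\right).
\]
All $y_k$ lie in the compact orbit $K:=\{F_s(z):s\in[0,t]\}\subset D$, and by the first step, for $h$ small both $\J_h(y)$ and $F_h(y)$ remain in a fixed compact set $K'\subset D$ containing $K$. On $K'$, the Kobayashi distance is dominated by a constant multiple of the Euclidean distance, so each summand is $O(h^2)$; setting $h=t/n$ and using $F_h^{\,n}=F_t$,
\[
k_D\!\left(\J_{t/n}^{\,n}(z),\,F_t(z)\right) \;=\; O\!\left(n\cdot (t/n)^2\right) \;=\; O(t^2/n) \;\longrightarrow\; 0
\]
uniformly for $z$ in compact subsets of $D$. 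Since $k_D$ metrizes the topology of the bounded domain $D$, the locally uniform convergence $\J_{t/n}^{\,n}\to F_t$ follows.

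The main obstacle is the quantitative bookkeeping in Step~1: in order to sum the per-step errors, the $O(h^2)$ constants must be uniform across all intermediate iterates. This requires producing a single compact $K'\subset D$ that simultaneously contains the orbit $K$, the Euler iterates $\J_h(y_{k-1})$ and the exact iterates $F_h(y_{k-1})$ for all $k=1,\dots,n$ and all sufficiently small $h$. This is standard given the a priori bound $\J_h(y)-y=O(h)$ on compacts, but it is the one place where one must verify that the resolvents do not push points of $K$ toward $\partial D$, so that a uniform Kobayashi--Euclidean comparison constant is available.
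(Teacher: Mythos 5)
The paper offers no proof of Theorem B: it is quoted from the books of Reich--Shoikhet \cite{R-S1} and Shoikhet \cite{SD}, where the exponential formula is obtained by arguments of Crandall--Liggett/Hille--Yosida type built on the resolvent identity and the nonexpansiveness of $\J_r$ in the hyperbolic metric, in a setting general enough for Banach spaces. Your Chernoff-style route --- one-step consistency $\J_h-F_h=O(h^2)$ on compacta, telescoped through Kobayashi-nonexpansive iterates --- is genuinely different and, in one complex variable, more elementary; it even yields a quantitative rate $O(t^2/n)$. What it costs is the need for local uniform bounds on $f$ and $f'$ on compact subsets, which are free here but not in the infinite-dimensional setting of the cited references.

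There is one step you must repair. You justify $|\J_h(z)-z|=h|f(\J_h(z))|=O(h)$ by ``the boundedness of $D$'', but boundedness of $D$ does not bound $f$: a generator may blow up at the boundary (e.g.\ $f(z)=z/(1-z)$, the paper's own example), so $f(\J_h(z))$ is controlled only after one knows that $\J_h(z)$ stays in a fixed compact subset of $D$ --- which is precisely what the estimate is meant to establish; as written it is circular. The fix is standard: for $z$ ranging over a compact $K$, pick $\delta>0$ with $\overline{B}(z,\delta)\subset K'\subset D$ for a compact $K'$, and apply Rouch\'e to $g(\zeta)=\zeta+hf(\zeta)-z$ on $|\zeta-z|=\delta$ (exactly the device of the paper's Lemma \ref{sufficient}) to conclude that for $h<\delta/\sup_{K'}|f|$ the unique zero of $g$ in $D$ already lies in $B(z,\delta)$; then $|\J_h(z)-z|\le h\sup_{K'}|f|$, and $\J_h(z)=z-hf(z)+O(h^2)$ follows from the Lipschitz bound for $f$ on $K'$. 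The same localization must be performed once, uniformly, along the compact orbit $\{F_s(z):0\le s\le t\}$ --- the point you yourself flag --- so that a single compact $K'$ and a single constant in the two-sided comparison between the Kobayashi and Euclidean distances serve all $n$ steps. With that in place, the telescoping inequality, the bound $n\cdot O((t/n)^2)\to0$, and the passage from hyperbolic to locally uniform Euclidean convergence are all correct.
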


Here we denote by $G^{n}$ the $n$-th iterate of a self-mapping $G$ of $D;$ namely,
$G^{1}=G$, $G^{n}=G\circ G^{n-1}$, $n=1,2,\dots,$ and the limit here and hereafter,
unless otherwise stated, will be in the topology of locally uniform convergence on $D.$

In this paper we mostly deal with the case where  $D$ is the open unit disk
$\D=\{z: |z|<1\}$ in the complex plane $\C.$
Various representations of the class $\mathcal{G}(\D )$ can be found in the
books \cite{SD}, \cite{R-S1} and \cite{E-S-book}. For our purposes we need the following.

\begin{ThmC}
\label{teorA} Let $f\in \hol(\D , \C)$. Then $f\in \mathcal{G}(\D )$ if and only if one of the following conditions holds:
\begin{enumerate}
\item[(i)] $($Berkson-Porta representation \cite{B-P}$)$\quad
there exists a point $\tau$ in the closed unit disk $\overline\D$ such that
\begin{equation*}
f(z)=(z-\tau )(1-z\overline{\tau })p(z),\quad z\in\D;
\end{equation*}
\item[(ii)] $($Aharonov-Elin-Reich-Shoikhet criterion \cite{A-E-R-S}$)$ \quad
there exists a function $q\in\hol(\D,\C)$ with $\Re q\ge0$ such that
\begin{equation*}
f(z)=zq(z)+f(0)-\overline{f(0)}z^{2},\quad z\in\D.
\end{equation*}
\end{enumerate}
\end{ThmC}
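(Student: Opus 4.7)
The plan is to establish both characterisations from the dynamical viewpoint of the semigroup $\{F_{t}\}$ generated by $f$, exploiting the ODE $\dot u+f(u)=0$. I would prove (i) directly by Denjoy--Wolff and Schwarz--Pick, and then obtain (ii) from (i) by algebra together with the maximum/Poisson principle applied to the relevant harmonic functions. Throughout I assume the standard Berkson--Porta condition $\Re p\ge 0$ on the factor $p$ in (i).

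For (i), given $f\in\G(\D)$, each non-identity $F_{t}$ admits a unique attracting fixed point $\tau_{t}\in\overline{\D}$, and the semigroup law $F_{t+s}=F_{t}\circ F_{s}$ together with continuity at $t=0$ forces all $\tau_{t}$ to coincide with a single $\tau\in\overline{\D}$. Define
\[
p(z):=\frac{f(z)}{(z-\tau)(1-z\bar\tau)},
\]
which is \emph{a priori} holomorphic on $\D\setminus\{\tau\}$; when $\tau\in\D$ the zero of $f$ at $\tau$ removes the singularity, and when $\tau\in\partial\D$ the Julia--Carath\'eodory theorem applied to $F_{t}$ yields the required boundary behaviour. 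The core calculation is
\[
\frac{d}{dt}\log\left|\frac{F_{t}(z)-\tau}{1-F_{t}(z)\bar\tau}\right|^{2}
  = -2(1-|\tau|^{2})\,\Re p(F_{t}(z)),
\]
and the Schwarz--Pick inequality for $F_{t}$ forces the left-hand side to be $\le 0$, giving $\Re p\ge 0$ along every orbit and hence on the whole of $\D$. For the converse, the same identity, read with $f$ of the form (i) and $\Re p\ge 0$, shows that each solution of $\dot u+f(u)=0$ stays inside the pseudo-hyperbolic disk based at $\tau$ that contains the initial value; thus the trajectory remains in $\D$ for all $t\ge 0$, and Theorem A delivers $f\in\G(\D)$.

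For the equivalence (i)$\Leftrightarrow$(ii) I would use the algebraic identity
\[
\Re[\bar z\,f(z)] = (1-|z|^{2})\Re[\bar z\,f(0)] + |z|^{2}\Re q(z),\qquad z\in\D,
\]
valid whenever $f(z)=f(0)+zq(z)-\overline{f(0)}z^{2}$; note that $q(z)=[f(z)-f(0)+\overline{f(0)}z^{2}]/z$ extends holomorphically across the origin because the numerator vanishes there. This identity translates $\Re q\ge 0$ into the inward-pointing flow condition $\Re[\bar z f(z)]\ge (1-|z|^{2})\Re[\bar z f(0)]$, which at $|z|=1$ reduces to the standard boundary invariance condition $\Re[\bar z f(z)]\ge 0$. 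Substituting the Berkson--Porta form on the left and simplifying expresses this same difference as a positive multiple of $\Re p(z)$, so the two positivity conditions match boundary value by boundary value; the minimum principle for harmonic functions then transfers the equivalence to the interior. The step I expect to be most delicate is the boundary case $\tau\in\partial\D$: there $p$ has to be shown well defined and holomorphic up to $\partial\D$ via angular-derivative arguments, and the translation between $\Re p$ and $\Re q$ must be carried out using non-tangential limits rather than direct evaluation at $\tau$; the interior case $\tau\in\D$ is, by contrast, an unconditional computation.
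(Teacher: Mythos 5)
First, note that the paper itself does not prove Theorem C: it is quoted as a known result, with (i) attributed to Berkson--Porta \cite{B-P} and (ii), together with the equivalence of the two conditions, to \cite{A-E-R-S}. So your attempt can only be judged on its own merits, and it has two genuine gaps.

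The first gap is the boundary case $\tau\in\partial\D$, which your core identity simply cannot see. When $|\tau|=1$ the M\"obius factor $\phi(w)=(w-\tau)/(1-w\bar\tau)$ collapses to the constant $-\tau$, and the right-hand side of your identity carries the factor $1-|\tau|^2=0$; the identity degenerates to $0=0$ and yields no information about $\Re p$ in either direction. (The difficulty you flag for this case --- well-definedness of $p$ --- is in fact the trivial part: for $|\tau|=1$ one has $(z-\tau)(1-z\bar\tau)=-\bar\tau(z-\tau)^2\neq0$ on $\D$, so $p$ is automatically holomorphic.) The actual proof for a boundary Denjoy--Wolff point requires a different Lyapunov function, namely the Julia quotient $|1-\bar\tau F_t(z)|^2/(1-|F_t(z)|^2)$, whose monotonicity along orbits (Julia's lemma / invariance of horodisks) is what produces $\Re p\ge0$; and conversely the invariance of horodisks, not of pseudo-hyperbolic disks centered at $\tau$, is what keeps the trajectories of $\dot u+f(u)=0$ inside $\D$. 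Without this your argument proves (i) only when the Denjoy--Wolff point is interior. (Two smaller points in the same direction: elliptic-automorphism semigroups have no \emph{attracting} fixed point, so the appeal to Denjoy--Wolff needs a separate remark there; and Theorem A is about solvability of $z+rf(z)=w$, not about global solvability of the ODE, so it is not the right thing to invoke for the converse --- the definition of $\G(\D)$ via the flow is.)

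The second gap is the passage (i)$\Leftrightarrow$(ii). Your algebraic identity $\Re[\bar z f(z)]=(1-|z|^2)\Re[\bar z f(0)]+|z|^2\Re q(z)$ is correct and does show that (ii) is equivalent to the flow condition $\Re[\bar z f(z)]\ge(1-|z|^2)\Re[\bar z f(0)]$, as the paper remarks. But the step ``the two positivity conditions match boundary value by boundary value; the minimum principle for harmonic functions then transfers the equivalence to the interior'' is not a proof. Neither $\Re p$ nor $\Re q$ need have boundary limits (and a harmonic function with nonnegative a.e.\ radial limits can be negative inside --- the negative of a Poisson kernel is the standard example), while the quantity $\Re[\bar z f(z)]-(1-|z|^2)\Re[\bar z f(0)]$ is not harmonic in $z$, so no minimum principle applies to it. This equivalence is exactly the nontrivial content of \cite{A-E-R-S}, proved there by a direct complex-analytic computation relating $q$ to $p$; it cannot be dispatched by comparing formal boundary values.
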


The equivalence of conditions (i) and (ii) was shown  in \cite{A-E-R-S}
by using direct complex analytic methods.
Note also that condition (ii) can be written as
\begin{equation*}
\Re (f(z)\overline{z})\geq (1-|z|^{2})\,\Re (f(0)\overline{z})\quad z\in \D .
\end{equation*}

Observe that the first term $f_{1}(z)=z\cdot q(z)$ of the
decomposition formula in (ii) is also an element of $\mathcal{G}(\D )$
with $f_{1}(0)=0$, while the remainder term $f_{2}(z)=a-\overline{a}%
z^{2}$ with $a=f(0)$ is the generator of a one-parameter group of hyperbolic
automorphisms of $\D $.
This implies, in turn, that the set $\mathcal{G}(\D )$ is a real cone in
$\C$ and if $f=h+g$ for some $h,~g\in\mathcal{G}(\D )$
generating the semigroups $\{H_{t}\}_{t\geq 0}$ and $\{G_{t}\}_{t\geq 0}$,
respectively, then $f\in \mathcal{G}(\D )$ and the semigroup $\{F_{t}\}$
generated by $f$ can be reproduced by the so-called product formula%
\begin{equation*}
F_{t}=\lim_{n\rightarrow \infty }\left[ H_{\frac{t}{n}}^{n}\circ G_{\frac{t}{%
n}}^{n}\right] \text{.}
\end{equation*}

Since presentation (i) in Theorem C is
unique, it follows that $f\in \mathcal{G}(\D )$ must have at most one null point (zero) in $\D $. This point $\tau $ is known to be the\textit{\
Denjoy-Wolff point} for the semigroup  $\mathcal{F}=\left\{ F_{t}\right\} _{t\geq 0}$ generated by $f$, that is, if $\mathcal{F}$
contains neither an elliptic automorphism of $\D $ nor the identity mapping, then
\begin{equation}
\underset{t\rightarrow \infty }{\lim }F_{t}\left( z\right) =\tau \quad
\text{  in }~ z\in \D .  \label{D-Wp'}
\end{equation}
The constant mapping $\tau $ is also the limit point for the resolvent family.
Namely,
\begin{equation}
\lim_{r\to\infty}\mathcal{J}_{r}( w) =%
\lim_{r\to\infty}(I+rf)^{-1}(w) =\tau, \quad
w\in \D .  \label{implicit}
\end{equation}
Moreover, for each fixed $r>0$,
\begin{equation}
\lim_{n\to\infty}\mathcal{J}_{r}^{n}\left( w\right)
=\lim_{n\to\infty}(I+rf)^{-n}\left( w\right) =\tau, \quad
w\in \D .  \label{imp2}
\end{equation}%
These assertions may be considered as explicit and implicit continuous analogs
of the classical Denjoy-Wolff Theorem (see, for example, \cite{R-S1} and
\cite{SD}). Note that, in contrast to the formula (\ref{D-Wp'}), the formulae (\ref{implicit}) and (\ref{imp2}) are valid for all $f\in \mathcal{G}(\D ).$

Regarding the boundary behaviour, it is known that if $f\in \mathcal{G}(\D )$ has a boundary regular null point, then this point is a boundary regular fixed point of each element $F_{t}$ of the semigroup generated by $ f$  (see, for example, \cite{SD2, C-D-P}  and \cite{E-S-book}). However  this fact is no longer true for all elements of the resolvent family $\left\{ \mathcal{J}_{r}\right\} _{r\ge0}.$ We study this situation in more detail in Section 5.

\section{Decay properties of the semigroup}

Recall that the semigroup $\{F_t(z)\}_{t\ge0}$ can be reproduced by its generator $f$ as the unique solution of
the differential equation
\begin{equation}\label{eq:DE}
\frac{\partial F_t(z)}{\partial t}+f(F_t(z))=0,\quad t\ge0,
\end{equation}
which satisfies the initial condition $F_0(z)=z.$
In what follows, we restrict ourselves on the case where the generator $f$ satisfies $f(0)=0$,
so that $F_t(0)=0$ for $t\ge0$ and the origin is the
Denjoy-Wolff point of the semigroup.
By Theorem C (ii), we see that a function $f\in\hol(\D,\C)$ with $f(0)=0$
belongs to the class $\mathcal{G}(\D)$ if and only if $\Re[f(z)/z]\ge 0,~ z\in\D.$
Here the value of $f(z)/z$ at $z=0$ is understood to be $f'(0).$
In the sequel, we will assume that $\Re[f(z)/z] \neq 0$;
otherwise $f(z)/z$ is identically a purely imaginary constant; that is,
$f(z)=aiz$ for a real constant $a$, hence the semigroup consists of just rotations
or the identity mapping; namely, $F_t(z)=ze^{-ati},$  so that
$\{F_t\}_{t\ge0}$ does not have the Denjoy-Wolff point at $z=0.$
We denote by $\N$ the set of such functions $f$; that is,
\begin{equation*}
\mathcal{N}=\left\{ f\in \mathop{\rm Hol}\nolimits(\D ,\mathbb{C}):\
f(0)=0,\ \operatorname{Re}\frac{f\left( z\right) }{z}> 0\right\} .
\end{equation*}%
We study geometric properties of the resolvent family of a function
in this class, which may be of independent interest.

The class $\mathcal{N}$ has been studied independently in the framework of
geometric function theory (see, for example, \cite{marx}, \cite{stro}, \cite%
{suf}, \cite{G-K}\textbf{\ }and references therein) with its relations to
the classes of convex and starlike functions. In particular, classical
results in \cite{marx}, \cite{stro} and Theorem C above imply that
\textit{each convex function }$h\in \mathop{\rm Hol}\nolimits(\D ,%
\mathbb{C})$\textit{\ with }$h(0)=0$\textit{\ is an element of }$\mathcal{N}%
. $

Also, it is a simple exercise to show that the class $\mathcal{NW}$ of
functions $f\in\hol(\D, \C)$ with $f(0)=0$
satisfying the Noshiro-Warschawski condition
\begin{equation}
\operatorname{Re}f'(z) >0,\quad z\in \D,  \label{N_W}
\end{equation}
consists of univalent functions and is contained in $\mathcal{N}\subset \mathcal{G%
}\left( \D \right) $. See for details and more results \cite{Goodman},
\cite{duren}, \cite{FSG}, \cite{E-S-S} and \cite{SD1}.

Here we consider properties of semigroups and resolvent families of
elements in $\mathcal{N}$.
For the semigroup $\{F_{t}\}_{t\geq 0}$ generated by a function $f\in \mathcal{N},$
a sharp estimate for the rate of convergence to the origin
is established by Gurganus \cite{GKR-75}
(see also \cite{SD} and \cite{E-S-book} for details):
\begin{equation*}
|F_{t}\left( z\right) |\leq |z|\exp\left(-t \, \Re f'(0)\frac{1-|z|}{1+|z|}\right).
\end{equation*}
However, it should be mentioned that this estimate is \emph{not} uniform on
$\D $.
The following result (see \cite{FSG}) gives us a criterion for uniform decay of $|F_t(z)|.$
Since in \cite{FSG} the additional condition $f'(0)=1$ was assumed, we reproduce its proof
without unnecessary restrictions for convenience of the reader.

\begin{lemma}
\label{lem:kappa}
Let $\kappa>0$ be a constant.
Then the semigroup $\{F_{t}\}_{t\geq 0}$ generated by an
$f\in \N$ has the uniform exponential rate of convergence
\begin{equation}\label{eq:kappa1}
|F_{t}(z)|\leq |z|e^{-\kappa t },\quad t\in[0,\infty), z\in\D,
\end{equation}%
if and only if
\begin{equation}\label{eq:kappa2}
\operatorname{Re}\frac{f(z)}{z}\geq \kappa,\quad z\in\D.
\end{equation}
\end{lemma}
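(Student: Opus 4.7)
The plan is to prove the two implications by differential calculus applied to the scalar function $\phi(t)=|F_t(z)|^2$ along each semigroup orbit, using the ODE $\partial_t F_t(z)=-f(F_t(z))$ from \eqref{eq:DE}.

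For the sufficiency direction, fix $z\in\D$ and set $u(t)=F_t(z)$, $\phi(t)=|u(t)|^2$. Differentiating and using the ODE,
\begin{equation*}
\phi'(t)=2\operatorname{Re}\bigl(u'(t)\overline{u(t)}\bigr)
=-2\operatorname{Re}\bigl(f(u(t))\overline{u(t)}\bigr)
=-2|u(t)|^{2}\operatorname{Re}\frac{f(u(t))}{u(t)},
\end{equation*}
whenever $u(t)\neq 0$, which is valid because $u(t)\in\D$ stays in the domain where $f\in\N$ is defined. Assumption \eqref{eq:kappa2} then gives the differential inequality $\phi'(t)\le-2\kappa\,\phi(t)$. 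A standard Gronwall argument (or multiplication by $e^{2\kappa t}$) yields $\phi(t)\le\phi(0)e^{-2\kappa t}$, i.e. \eqref{eq:kappa1}. If $u(t_0)=0$ at some $t_0\ge0$, then by the uniqueness of solutions of the autonomous ODE with $f(0)=0$, $u(t)\equiv0$ for $t\ge t_0$, and the estimate holds trivially there.

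For the necessity direction, fix $z\in\D\setminus\{0\}$ and use the fact that $F_t(z)$ is differentiable in $t$ at $t=0$ with $\partial_t F_t(z)|_{t=0}=-f(z)$; hence $F_t(z)=z-tf(z)+o(t)$ as $t\to0^+$. Squaring,
\begin{equation*}
|F_t(z)|^{2}=|z|^{2}-2t\operatorname{Re}\bigl(\overline{z}f(z)\bigr)+o(t).
\end{equation*}
On the other hand, \eqref{eq:kappa1} gives $|F_t(z)|^{2}\le|z|^{2}e^{-2\kappa t}=|z|^{2}-2\kappa t\,|z|^{2}+o(t)$. Subtracting, dividing by $t>0$, and letting $t\to0^+$, we deduce $\operatorname{Re}(\overline{z}f(z))\ge\kappa|z|^{2}$, which is exactly \eqref{eq:kappa2} for $z\ne 0$. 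The inequality extends to $z=0$ by continuity, since $\operatorname{Re}[f(z)/z]$ has a holomorphic representative that is continuous at the origin with value $\operatorname{Re}f'(0)$.

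The argument is essentially routine; the only point requiring care is the Gronwall step and the handling of possible zeros of $u(t)$, both of which are immediate. No deep geometric input from the Berkson-Porta or Aharonov-Elin-Reich-Shoikhet representations is needed here, because the ODE characterization of the semigroup together with a first-order expansion at $t=0$ already captures the equivalence.
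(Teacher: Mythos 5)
Your proposal is correct and follows essentially the same route as the paper: both differentiate $|F_t(z)|^2$ along orbits via \eqref{eq:DE}, obtain \eqref{eq:kappa2} from the first-order behaviour at $t=0$, and obtain \eqref{eq:kappa1} from a Gronwall-type integration of the resulting differential inequality (the paper phrases this with the auxiliary function $g(t)=|F_t(z)|^2-|z|^2e^{-2\kappa t}$ rather than with $\phi(t)=|F_t(z)|^2$ directly, which also lets it avoid dividing by $u(t)$ and hence the separate discussion of zeros of the orbit). The differences are purely organizational.
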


\begin{proof}
For a fixed $z\in\D,$ we put
$$
g(t)=|F_t(z)|^2-|z|^2e^{-2\kappa t}=F_t(z)\overline{F_t(z)}-|z|^2e^{-2\kappa t},
$$
so that
\begin{align}\label{aux1}
g'(t)
&=-2\Re\left[\overline{F_t(z)}f(F_t(z))\right]+2\kappa|z|^2e^{-2\kappa t}\nonumber \\
&=-2\Re\left[\overline{F_t(z)}f(F_t(z))\right]+2\kappa\left(|F_t(z)|^2-g(t)\right),
\end{align}
where we used \eqref{eq:DE}.
We assume condition \eqref{eq:kappa1} so that $g(t)\le 0$ for $t\ge0.$
Then, because of $g(0)=0,$ we get
$$
0\ge \lim_{t\to0^+}\frac{g(t)}{t}=g'(0)=-2\Re\left[\overline z f(z)\right]+2\kappa|z|^2,
$$
from which \eqref{eq:kappa2} follows.

Next we assume condition \eqref{eq:kappa2} and put $h(t)=g'(t)+2\kappa g(t).$
Then by \eqref{aux1}, $h(t)\le 0$ for $t\ge0.$
Since $[e^{2\kappa t}g(t)]'=e^{2\kappa t}h(t),$ one has
$$
e^{2\kappa t}g(t)=\int_0^t e^{2\kappa \tau}h(\tau)d\tau\le 0,
$$
which implies \eqref{eq:kappa1}.
\end{proof}

The number $\kappa $ satisfying \eqref{eq:kappa1} is called an \textit{exponential
squeezing coefficient.} For instance, if $\{F_{t}\}_{t\geq 0}$\ is
generated by $f\in $ $\mathcal{NW}$ with $f^{\prime }\left( 0\right) =1$,
then it converges uniformly to the origin and has the exponential squeezing
coefficient\textit{\ }%
\begin{equation*}
\kappa =2 \log 2-1=0.386\cdots.
\end{equation*}%
This estimate is sharp.
See \cite{FSG}, \cite{E-S-S} and \cite{SD1} for the proof of this fact.

\begin{definition}
\label{star}A function $f\in \hol(\D, \C)$ with $f\left( 0\right) =0,~ f'(0)\ne 0$
is said to be starlike of order $%
\alpha \in \lbrack 0,1)$ if
\begin{equation*}
\operatorname{Re}\frac{zf^{\prime }\left( z\right) }{f\left( z\right) }>\alpha ,%
\quad z\in \D .
\end{equation*}
The set of starlike functions $f$ of order $\alpha \in \lbrack 0,1)$ with $f'(0)>0$
will be denoted by $\es^{\ast }\left( \alpha \right) .$
\end{definition}

Due to the
Nevanlinna-Alexander criterion (see, for example, \cite{Goodman}), $%
\es^{\ast }\left( 0\right) $ is the set of those univalent functions $f$ on
$\D $ with $f(0)=0,~ f'(0)>0$ for which $f(\D)$ is a starlike domain
with respect to the origin.

The following assertion is an extension of classical results of Marx \cite{marx} and
Strohh\"{a}cker \cite{stro} (see also \cite[Theorem 2.6a]{M-M}).

\begin{lemma}
\label{lem:ss}
For $\alpha \in \lbrack 0,1),$ the implication relation $\mathcal{S}%
^{\ast }(\alpha )\subset \mathcal{N}$ holds if and only if $\alpha \geq \frac{1}{2}$.
If a function $f$ with $f'(0)=\beta>0$ belongs to the class $\mathcal{S}^{\ast }(\alpha )$
for some $\alpha \geq \frac{1}{2}, $
then $2^{-2(1-\alpha)}\beta$ is an exponential squeezing coefficient for $f.$
That is to say, the semigroup $\{F_{t}\left( z\right) \}_{t\geq 0}$ generated
by $f$ has the following uniform rate of convergence:
\begin{equation*}
|F_{t}\left( z\right) |\leq |z| \exp\left[- 2^{-2(1-\alpha )}\beta t\right],\quad z\in\D.
\end{equation*}
Moreover, the following inequality holds and the bound $(1-\alpha)\pi$ is sharp:
\begin{equation}\label{eq:sector}
\left|\arg\frac{f(z)}{z}\right|< (1-\alpha)\pi,\quad z\in\D.
\end{equation}
\end{lemma}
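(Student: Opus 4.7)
The plan is to base everything on the sharp subordination
$$\frac{f(z)}{\beta z}\prec \frac{1}{(1-z)^{2(1-\alpha)}},$$
which holds for every $f\in\mathcal{S}^*(\alpha)$ with $f'(0)=\beta>0$. I would derive this classical fact by writing $zf'(z)/f(z)=\alpha+(1-\alpha)p(z)$ with $p\in\hol(\D,\C)$, $p(0)=1$, $\Re p>0$, integrating the identity $(\log[f(z)/z])'=(1-\alpha)(p(z)-1)/z$, and appealing to the Miller--Mocanu subordination machinery cited just before the statement. The majorant $k(z):=(1-z)^{-2(1-\alpha)}$ is univalent on $\D$: writing $\lambda:=2(1-\alpha)\in(0,1]$, it factors as $\D\to\{|w-1|<1\}\to\{\Re\zeta>1/2\}\to\C$, where the principal $\lambda$-th power is single-valued and one-to-one on the right half-plane because $\lambda\le 1$. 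Hence the subordination is equivalent to the range inclusion $f(z)/(\beta z)\in k(\D)$.

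The heart of the proof is to describe $k(\D)$ in the regime $\alpha\ge\tfrac12$. The factorisation immediately gives $k(\D)\subset\{|\arg\eta|<\lambda\pi/2=(1-\alpha)\pi\}$, which, together with $\beta>0$, yields the sector estimate \eqref{eq:sector}. For the sharper bound $\Re\eta\ge 2^{-\lambda}$ on $k(\D)$, I would parametrise the boundary line $\Re\zeta=\tfrac12$ by $\zeta=(2\cos\phi)^{-1}e^{i\phi}$, $\phi\in(-\pi/2,\pi/2)$, obtaining
$$\Re[\zeta^\lambda]=2^{-\lambda}\,\frac{\cos(\lambda\phi)}{\cos^\lambda\phi};$$
the minimum principle applied to the harmonic function $\Re\eta$ on the half-plane image of $\zeta\mapsto\zeta^\lambda$ then reduces everything to the calculus inequality $\cos(\lambda\phi)\ge\cos^\lambda\phi$ for $\lambda\in(0,1]$ and $|\phi|<\pi/2$, which I would verify by noting that $H(\phi):=\log\cos(\lambda\phi)-\lambda\log\cos\phi$ vanishes at $\phi=0$ and has derivative $\lambda[\tan\phi-\tan(\lambda\phi)]\ge 0$ on $[0,\pi/2)$. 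Combined with the subordination, this produces $\Re[f(z)/z]\ge 2^{-2(1-\alpha)}\beta$ on $\D$, which is simultaneously the inclusion $\mathcal{S}^*(\alpha)\subset\mathcal{N}$ for $\alpha\ge\tfrac12$ and, via Lemma~\ref{lem:kappa}, the claimed exponential squeezing coefficient.

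For the converse of the first assertion and for the sharpness of $(1-\alpha)\pi$, I would use the extremal Koebe-type function $k_\alpha(z):=\beta z(1-z)^{-2(1-\alpha)}\in\mathcal{S}^*(\alpha)$. When $\alpha<\tfrac12$ the exponent $\lambda=2(1-\alpha)>1$ allows $|\arg[k_\alpha(z)/(\beta z)]|=\lambda|\arg(1-z)|$ to exceed $\pi/2$ for suitable $z\in\D$, so $\Re[k_\alpha(z)/z]<0$ and $k_\alpha\notin\mathcal{N}$. For sharpness of \eqref{eq:sector}, a sequence $z_n\in\D$ tending to $1$ tangentially, arranged so that $\arg(1-z_n)\to-\pi/2$, gives $\arg[k_\alpha(z_n)/z_n]\to(1-\alpha)\pi$.

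The main technical obstacle I anticipate is the calculus inequality $\cos(\lambda\phi)\ge\cos^\lambda\phi$, together with the (mild) subtlety that this boundary bound actually controls $\Re\eta$ throughout $k(\D)$ via the minimum principle; the rest is bookkeeping or invocation of well-known subordination facts.
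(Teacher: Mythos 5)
Your proposal is correct and follows essentially the same route as the paper: both rest on the subordination $f(z)/(\beta z)\prec(1-z)^{-2(1-\alpha)}$ (the paper cites Pinchuk's theorem for it) followed by an analysis of the range of the majorant, giving the sector bound $|\arg|<(1-\alpha)\pi$ and the lower bound $2^{-2(1-\alpha)}$ for the real part, and then Lemma~\ref{lem:kappa}. The only (harmless) difference is that you compute $\inf_{\D}\Re(1-z)^{-2(1-\alpha)}=2^{-2(1-\alpha)}$ by parametrizing the boundary line $\Re\zeta=\tfrac12$ and proving the calculus inequality $\cos(\lambda\phi)\ge\cos^{\lambda}\phi$, whereas the paper gets the same infimum by invoking the convex univalence of $(1-z)^{-2(1-\alpha)}$ for $\alpha\ge\tfrac12$.
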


Before the proof, we recall the notion of subordination.
A function $f\in\hol(\D,\C)$ is said to be subordinate to another
$g\in\hol(\D,\C)$ and written as $f\prec g$ or $f(z)\prec g(z)$
if there is a function $\omega\in\hol(\D)$
with $\omega(0)=0$ such that $f=g\circ\omega.$
When $g$ is univalent, this means that $f(0)=g(0)$ and $f(\D)\subset
g(\D).$

\begin{proof}[Proof of the lemma]
By a theorem of Pinchuk \cite[Theorem 10]{Pin68}, we have
$f(z)/zf'(0)\prec f_\alpha(z)/z,$ where $f_{\alpha }(z)=z(1-z)^{-2(1-\alpha )}.$
Note that $f_\alpha\in \mathcal{S}^{\ast }(\alpha )$.
Since
$$
\arg\frac{f_\alpha(z)}{z}=-2(1-\alpha)\arg(1-z),\quad z\in\D,
$$
the supremum of $|\arg[f_\alpha(z)/z]|$ over $z\in\D$ is exactly $(1-\alpha)\pi,$
which implies \eqref{eq:sector}.
In particular, $\Re[f_\alpha(z)/z]>0$ holds precisely when $\frac12\le \alpha.$
Thus the necessity part has been proved.

Next we assume that $\frac12\le \alpha<1.$
Again by Pinchuk's theorem, we have
$$
\inf_{z\in\D}\Re\frac{f(z)}{\beta z}
\ge\inf_{z\in\D}\Re\frac{f_\alpha(z)}{z}=2^{-2(1-\alpha)},
$$
where we have used the fact that $f_\alpha(z)/z=(1-z)^{-2(1-\alpha)}$
is convex univalent on $\D$ for $\alpha\ge\frac12.$
The remaining part of the lemma follows from Lemma \ref{lem:kappa}.
\end{proof}

We now recall the notion of hyperbolically convex functions
that were studied by many authors and characterized in different aspects;
see for instance \cite{MM99}, \cite{MP00}, \cite{BPW} and references therein.

\begin{definition}
A univalent function $f\in\hol(\D)$ is called  hyperbolically convex if its image $D=f(\D)$
is a hyperbolically convex domain in the sense that for every pair of points $w_1, w_2\in D$,
the hyperbolic geodesic segment joining $w_1$ and $w_w$ in $\D$ lies entirely in $D.$
\end{definition}

Among other important properties of such functions, we recall the result due to
Mej\'\i a and Pommerenke \cite{MP00}  that
a hyperbolically convex function $f$ with $f(0)=0$ and $ f'(0)\ne0$ is starlike
of order $\frac12.$

We are now in a position to formulate our main results.

\begin{theorem}
\label{thm:NW} Let $\{%
\mathcal{J}_{r}\}_{r\geq 0}$ be the resolvent family of a function $f\in\N:$
\begin{equation*}
\mathcal{J}_{r}=(I+rf)^{-1}\in\hol(\D ), \quad r\geq 0.
\end{equation*}
Then for each $r\geq 0,$ the resolvent $\mathcal{J}_{r}$ belongs to $\mathcal{NW},$ that is,
\begin{equation}
\operatorname{Re}\mathcal{J}_{r}'(w)>0, \quad w\in\D,  \label{N-W}
\end{equation}%
and  $\J_r$ is hyperbolically convex.
Moreover,
\begin{equation}
\mathcal{J}_{r}(\D )\subset \mathcal{J}_{s}(\D ),  \quad 0\le s<r.
\label{seq}
\end{equation}%
\end{theorem}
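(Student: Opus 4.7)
I would attack the three assertions separately, in order of increasing difficulty.

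For the inclusion $\mathcal{J}_r(\D) \subset \mathcal{J}_s(\D)$, the quickest route is a resolvent identity. Setting $z = \mathcal{J}_r(w)$ so that $z + rf(z) = w$, one finds $z + sf(z) = z + (s/r)(w - z) = (s/r)w + (1 - s/r)z$ for every $s \in [0, r]$, hence
$$
\mathcal{J}_s\bigl((s/r)w + (1 - s/r)\mathcal{J}_r(w)\bigr) = \mathcal{J}_r(w).
$$
Since the argument on the left is a convex combination of $w, \mathcal{J}_r(w) \in \D$ and therefore lies in $\D$, we conclude $\mathcal{J}_r(w) \in \mathcal{J}_s(\D)$.

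For the Noshiro--Warschawski condition, implicit differentiation of $\mathcal{J}_r + rf \circ \mathcal{J}_r = I$ gives $\mathcal{J}_r'(w) = 1/(1 + rf'(z))$ with $z = \mathcal{J}_r(w)$, so the claim reduces to $\Re(1 + rf'(z)) > 0$ for $z \in \Omega_r := \mathcal{J}_r(\D)$. Writing $f(z) = zp(z)$ with $\Re p > 0$ (available from Theorem C(ii) since $f(0) = 0$), so that $f'(z) = p(z) + zp'(z)$, the Carath\'eodory inequality $(1 - |z|^2) |p'(z)| \le 2 \Re p(z)$ gives
$$
\Re\bigl(1 + rf'(z)\bigr) \ge 1 + r\Re p(z) \cdot \frac{1 - 2|z| - |z|^2}{1 - |z|^2}.
$$
On the other hand, the constraint $z \in \Omega_r$ forces $|z|\bigl(1 + r\Re p(z)\bigr) \le |z|\,|1 + rp(z)| = |z + rf(z)| < 1$, hence $r\Re p(z) < (1 - |z|)/|z|$. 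Splitting into the cases $|z| \le \sqrt{2} - 1$ and $|z| > \sqrt{2} - 1$ and substituting the latter bound in the negative regime, a short algebraic manipulation yields $\Re(1 + rf'(z)) > (1-|z|)/[|z|(1+|z|)] > 0$.

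For hyperbolic convexity, I would employ the Mej\'\i a--Pommerenke infinitesimal characterization, which asserts that a univalent self-map $\phi$ of $\D$ with $\phi(0) = 0$ is hyperbolically convex if and only if
$$
\Re\left\{1 + \frac{w\phi''(w)}{\phi'(w)} + \frac{2w\phi'(w)\overline{\phi(w)}}{1 - |\phi(w)|^2}\right\} \ge 0, \quad w \in \D.
$$
A second implicit differentiation of $\mathcal{J}_r + rf \circ \mathcal{J}_r = I$ yields $\mathcal{J}_r''(w) = -rf''(z)\mathcal{J}_r'(w)^3$; substituting this together with $w = z + rf(z)$ transforms the above criterion into a differential inequality in $p, p', p''$ on $\Omega_r$. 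Its verification calls for Carath\'eodory-type bounds on $p''$ in addition to the bound on $p'$ already used, together with the membership condition $z \in \Omega_r$. This is the technical heart of the argument and the step I expect to be the main obstacle: the Noshiro--Warschawski estimate provides a first-order template, but the second-order inequality for hyperbolic convexity is appreciably more intricate. A possible fallback is a continuity/deformation argument in $r$, starting from the trivial case $\mathcal{J}_0 = I$, should a direct bound prove elusive.
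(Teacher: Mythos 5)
Your treatment of the inclusion \eqref{seq} is correct and in fact slightly cleaner than the paper's: you observe that $z+sf(z)=(1-s/r)z+(s/r)w$ is a convex combination of $z=\J_r(w)$ and $w=z+rf(z)$, both in $\D$, whereas the paper estimates $|z+sf(z)|\le|z+rf(z)|$ using $\Re\big[f(z)\overline z\big]\ge0$; both work. Your proof of the Noshiro--Warschawski condition is also correct, though more computational than the paper's: you combine the Carath\'eodory bound $(1-|z|^2)|p'(z)|\le 2\Re p(z)$ with the constraint $|z|\big(1+r\Re p(z)\big)<1$ forced by $z\in\J_r(\D)$, and the case split at $|z|=\sqrt2-1$ does close (the algebra in the second case indeed yields the lower bound $(1-|z|)/[|z|(1+|z|)]$). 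The paper instead notes that $g(z)=z+rf(z)-w$ is itself a generator with interior null point $\J_r(w)$, so the Berkson--Porta representation gives $g'(\J_r(w))=(1-|\J_r(w)|^2)p(\J_r(w))$ with $\Re p>0$ immediately; that route is shorter and needs no derivative estimates.

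The genuine gap is the hyperbolic convexity. You correctly name the second-order Ma--Minda/Mej\'\i a--Pommerenke criterion as a target, but you do not carry out the verification and you yourself flag it as the main obstacle; the proposed fallback (a continuity/deformation argument in $r$ starting from $\J_0=I$) has no mechanism for propagating hyperbolic convexity along the family and would not work as stated. The paper sidesteps all second-order computations by invoking Solynin's theorem (Theorem D): for $\varphi\in\hol(\D)$ with $\varphi(0)\ne0$, the set $\{z\in\D:\,|z|<|\varphi(z)|\}$ is hyperbolically convex. Writing $h(z)=1+rf(z)/z$, one has $|h(z)|\ge\Re h(z)>1$, so $\varphi=1/h$ maps $\D$ into $\D$ with $\varphi(0)\ne0$, and
\begin{equation*}
z\in\J_r(\D)\;\Leftrightarrow\;|z+rf(z)|=|z|\,|h(z)|<1\;\Leftrightarrow\;|z|<|\varphi(z)|,
\end{equation*}
so that $\J_r(\D)$ is exactly a set of Solynin type. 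Without this (nontrivial, externally proved) ingredient, or a completed second-order computation, the hyperbolic convexity claim remains unproved in your proposal.
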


Note that $\J_r(0)=0$ and $\J_r'(0)=1/(1+rf'(0))$ (see \eqref{eq:J'(0)} below).
Since a hyperbolically convex function is starlike of order $\frac12$ as is mentioned above,
we obtain the following corollary.
Here, we also note that the Marx-Strohh\"acker theorem \cite{marx}, \cite{stro} states also that
a function $f\in\mathcal{S}^*(\frac12)$ satisfies $\Re[f(z)/zf'(0)]>1/2$ in $|z|<1$
(see also \cite[Theorem 2.6a]{M-M}).
Hence we obtain the following corollary.

\begin{corollary}\label{cor2}
Under the same assumptions as in Theorem \ref{thm:NW},
for each $r\ge 0,$ $\J_r$ is starlike of order $\frac12;$ namely,
\begin{equation}
\operatorname{Re}\frac{w\mathcal{J}_{r}^{\prime }(w)}{\mathcal{J}_{r}(w)}
>\frac{1}{2}\,,   \quad w\in\D. \label{G1.1}
\end{equation}%
If, in addition, $f'(0)=\beta>0,$ then
\begin{equation}
\operatorname{Re}\frac{\mathcal{J}_{r}\left( w\right) }{w}>\frac{1}{2(1+\beta r)}\,,
\quad w\in\D. \label{m-s}
\end{equation}%
In particular, for each $r>0,$
the semigroup generated by $\mathcal{J}_{r}$ converges to $0$
uniformly on $\D $ with exponential squeezing coefficient $\kappa =1/[2(1+\beta r)] $.
\end{corollary}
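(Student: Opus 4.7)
The plan is to chain together the three cited ingredients: Theorem \ref{thm:NW} (hyperbolic convexity of $\J_r$), the Mej\'\i a--Pommerenke result on hyperbolically convex functions, and the Marx--Strohh\"acker estimate, and then close with Lemma \ref{lem:kappa} for the squeezing statement. No delicate step is required; the main effort is just keeping track of the normalization $\J_r'(0)=1/(1+\beta r)$.

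First, for the starlikeness of order $\frac12$, I would simply invoke Theorem \ref{thm:NW}, which tells us that $\J_r$ is hyperbolically convex, together with the Mej\'\i a--Pommerenke theorem recalled immediately before the corollary: a hyperbolically convex function $g\in\hol(\D)$ with $g(0)=0$ and $g'(0)\neq0$ belongs to $\mathcal{S}^*(\frac12)$. Since $\J_r(0)=0$ and $\J_r'(0)=1/(1+rf'(0))\neq 0$ (as noted in the paper after Theorem \ref{thm:NW}), this is immediate and yields \eqref{G1.1}.

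For the second assertion I would apply the Marx--Strohh\"acker theorem that the excerpt recalls: any $g\in\mathcal{S}^*(\frac12)$ satisfies $\Re[g(z)/(zg'(0))]>\frac12$. Applied to $g=\J_r$, with $\J_r'(0)=1/(1+\beta r)>0$, this gives
\begin{equation*}
\Re\left[(1+\beta r)\,\frac{\J_r(w)}{w}\right]>\frac12,\quad w\in\D,
\end{equation*}
which rearranges directly to \eqref{m-s}.

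Finally, for the uniform exponential decay I would verify that $\J_r$ belongs to the class $\N$ (so that Lemma \ref{lem:kappa} is applicable): indeed $\J_r(0)=0$ because $f(0)=0$ forces $0$ to be the unique solution of $z+rf(z)=0$, and \eqref{m-s} already provides the strict positivity $\Re[\J_r(w)/w]>0$. Moreover, Theorem \ref{thm:NW} ensures $\J_r\in\mathcal{NW}\subset\mathcal{G}(\D)$, so $\J_r$ is a holomorphic generator. Estimate \eqref{m-s} then reads $\Re[\J_r(w)/w]\ge \kappa$ with $\kappa=1/[2(1+\beta r)]$, and Lemma \ref{lem:kappa} applied to the semigroup generated by $\J_r$ delivers the claimed uniform squeezing. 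The only thing to watch for here is the strictness versus non-strictness of the inequality, which is fine since Lemma \ref{lem:kappa} only needs the non-strict bound on $\Re[\J_r(w)/w]$, while \eqref{m-s} gives the strict one everywhere on $\D$.
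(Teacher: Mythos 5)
Your argument is correct and is essentially the paper's own: the authors likewise derive \eqref{G1.1} from the hyperbolic convexity in Theorem \ref{thm:NW} via the Mej\'\i a--Pommerenke result, obtain \eqref{m-s} from the Marx--Strohh\"acker theorem using the normalization $\J_r'(0)=1/(1+\beta r)$, and read off the squeezing coefficient from Lemma \ref{lem:kappa}. Your added checks (that $\J_r'(0)\neq 0$ so the Mej\'\i a--Pommerenke theorem applies, that $\J_r\in\N$ so Lemma \ref{lem:kappa} applies, and the strict versus non-strict inequality) are all sound and only make explicit what the paper leaves implicit.
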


We illustrate Theorem \ref{thm:NW} and Corollary \ref{cor2} by the following example.

\begin{example}
Consider the semigroup generator $f(z)=z/(1-z)$ in $\N.$
Solving the equation
$$
  z+rf(z)=z\left(1+\frac{r}{1-z}\right)=w
$$
in $z,$ we find its resolvent
$$
\mathcal{J}_r(w)=\frac{r+1+w}2-\frac12\sqrt{(r+1+w)^2-w}
$$
and then calculate
$$
\frac{w{\mathcal{J}_r}'(w)}{\mathcal{J}_r(w)}
=\frac12+ \frac{1+r-w}{2\sqrt{(1+r+w)^2-4w}}\,.
$$
For every fixed $r$, the real part of the last expression tends to its minimum as $w\to1$ and hence
  \[
  \inf\Re \frac{w{\mathcal{J}_r}'(w)}{\mathcal{J}_r(w)}=\frac12 + \frac12\sqrt{\frac{r}{r+4}}\,,
    \]
  so $\mathcal{J}_r$ is a starlike function of order $\frac12 + \frac12\sqrt{\frac{r}{r+4}}\,,$ which tends to $\frac12$ as $r\to0^+$.
\end{example}

\section{Proof of the main theorem}
\label{sec:Proof}

For the sake of completeness, we start this section with the following useful sufficient condition
(see \cite{SD} and \cite{R-S1}) for $f\in \hol(\D ,\C)$ to be an infinitesimal generator.

\begin{lemma}
\label{sufficient}
Let $f\in \hol(\D ,\C)$.
Suppose that there exists an $\varepsilon\in(0,1)$ such that
\begin{equation*}
\operatorname{Re}\left[ f(z)\overline z \right] >0 \quad
\text{for}~ z\in\D ~\text{with}~ |z|\ge 1-\varepsilon.
\end{equation*}%
Then
$f\in \mathcal{G}(\D )$.
\end{lemma}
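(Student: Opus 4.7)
The plan is to construct the semigroup $\{F_t\}_{t\ge 0}$ directly by solving the Cauchy problem
\begin{equation*}
\dot u(t)=-f(u(t)),\qquad u(0)=z_0\in\D,
\end{equation*}
and then to verify the two axioms of Definition \ref{def G1}. Since $f$ is holomorphic on $\D$ it is locally Lipschitz, so for every $z_0\in\D$ a unique solution exists on a maximal forward interval $[0,T(z_0))$ with values in $\D$, depending holomorphically on $z_0$. The only non-trivial point is to show $T(z_0)=\infty$, which is precisely where the boundary hypothesis enters.

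Set $\phi(t):=|u(t)|^2$; differentiating and using the ODE gives
\begin{equation*}
\phi'(t)=-2\Re\bigl[\overline{u(t)}\,f(u(t))\bigr],
\end{equation*}
which is strictly negative whenever $1-\varepsilon\le |u(t)|<1$. A short continuity argument then yields
\begin{equation*}
\phi(t)\le \max\bigl\{|z_0|^2,\,(1-\varepsilon)^2\bigr\}<1\quad \text{for all }t\in[0,T(z_0)).
\end{equation*}
Indeed, if $|z_0|<1-\varepsilon$, then at a first time $\phi$ could touch $(1-\varepsilon)^2$ we would need $\phi'\ge 0$ there, contradicting the hypothesis; if instead $|z_0|\ge 1-\varepsilon$, the hypothesis forces $\phi$ to decrease strictly as long as it remains in $[(1-\varepsilon)^2,1)$, while once it drops below $(1-\varepsilon)^2$ the previous case applies and keeps it there. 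Hence $u(t)$ stays in a fixed compact subset of $\D$, and standard ODE continuation gives $T(z_0)=\infty$.

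Now define $F_t(z_0):=u(t)$. Holomorphic dependence of solutions on initial data for holomorphic vector fields places $F_t\in\hol(\D,\D)$; the semigroup identity $F_{t+s}=F_t\circ F_s$ is immediate from uniqueness of solutions, and $F_t(z)\to z$ locally uniformly as $t\to 0^+$ follows from continuous dependence of $u$ on $t$, uniformly for $z$ in compact subsets of $\D$. Thus $\{F_t\}_{t\ge 0}$ is a one-parameter continuous semigroup with infinitesimal generator $f$, so $f\in\mathcal{G}(\D)$. The main obstacle is the modulus argument in the middle paragraph, which has to be phrased carefully when $z_0$ already lies in the boundary annulus $1-\varepsilon\le|z|<1$; the rest is routine ODE theory.
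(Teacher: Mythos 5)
Your argument is correct, but it takes a genuinely different route from the paper. The paper does not construct the semigroup at all: it invokes Theorem A, by which it suffices to show that for every $r>0$ and $w\in\D$ the resolvent equation $z+rf(z)=w$ has a unique solution in $\D$; this is done by estimating $\Re\left[(z+rf(z)-w)\overline z\right]>0$ on a circle $|z|=t$ with $\max\{1-\varepsilon,|w|\}<t<1$ and counting zeros via the argument principle. You instead integrate the vector field directly and prove forward invariance of $\D$ by the barrier argument on $\phi(t)=|u(t)|^2$: the sign hypothesis on the annulus $1-\varepsilon\le|z|<1$ forces $\phi$ to be strictly decreasing there, so the trajectory can never reach the boundary, and global existence plus holomorphic dependence on initial data give the semigroup. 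Your case analysis at a first touching time $t_1$ (where $\phi'(t_1)\ge 0$ would contradict the hypothesis) is sound, and the bound $\max\{|z_0|^2,(1-\varepsilon)^2\}$ is uniform for $z_0$ in compact subsets of $\D$, which is exactly what is needed for the locally uniform convergence $F_t\to \mathrm{id}$ and for holomorphy of $F_t$. What each approach buys: the paper's proof is shorter modulo Theorem A and, importantly, produces the resolvent itself (the unique zero of $z+rf(z)-w$), which is reused immediately in the proof of Theorem \ref{thm:NW}; your proof is self-contained at the level of the semigroup definition and avoids quoting Theorem A, at the cost of importing standard but nontrivial facts about holomorphic dependence of ODE solutions on initial conditions. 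Since the paper needs the explicit zero-counting argument later anyway, your route would not fully substitute for the one given, but as a proof of the lemma as stated it is complete.
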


\begin{proof}
By Theorem A, it is enough to show existence of
the nonlinear resolvent of $f$ for all $r>0.$
For fixed $0<r<1$ and $w\in\D,$ put
\begin{equation}\label{eq:g}
g(z)=z+rf(z)-w.
\end{equation}
Choose $t$ so that $\max\{1-\varepsilon, |w|\}<t<1.$
Then, for $|z|=t,$
\begin{align*}
\Re\big[g(z)\overline z\big]
&\ge |z|^2+r\Re\big[ f(z)\overline z\big] -|zw| \\
&>|z|(|z|-|w|)=t(t-|w|)>0.
\end{align*}
Therefore, by the argument principle, we see that
the number of zeros of $g(z)/z$ in $|z|<t$ is the same as
that of poles of $g(z)/z,$ which is $1.$
Thus the function $g(z)$ has a unique zero in the unit disk $\D.$
So, the result follows.
\end{proof}

We also need the following result for the proof of our main theorem.
This assertion was first conjectured by Mej\'{\i}a and Pommerenke \cite{MP05}
and proved by Solynin \cite{Sol07}.

\begin{ThmD}\label{Thm:Solynin}
Let $\varphi\in\hol(\D)$ satisfy $\varphi(0)\ne0.$
Then the open set $\Omega=\{z\in\D: |z|<|\varphi(z)|\}$ is hyperbolically convex
in $\D.$
\end{ThmD}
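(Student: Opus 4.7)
The plan is to adapt Solynin's approach, which combines approximation by finite Blaschke products with a polarization / dissymmetrization argument in the hyperbolic geometry of $\D$.

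\emph{Step 1 (Reduction to Blaschke products).} Every $\varphi\in\hol(\D,\D)$ with $\varphi(0)\ne0$ is a locally uniform limit on $\D$ of finite Blaschke products $B_n$ with $B_n(0)=\varphi(0)$. The sets $\Omega_n=\{z\in\D:|z|<|B_n(z)|\}$ all contain $0$ and converge to $\Omega$ in the Carath\'eodory kernel sense at $0$. Since hyperbolic convexity of open subsets of $\D$ containing a common interior point is preserved under such kernel convergence, it suffices to prove the theorem when $\varphi=B$ is a finite Blaschke product with $B(0)\ne0$.

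\emph{Step 2 (Polarization).} For such a $B$, the boundary $\partial\Omega_B\cap\D$ is a finite union of real-analytic arcs. Fix a hyperbolic geodesic $L\subset\D$ with hyperbolic reflection $\sigma_L$ and half-planes $H^+\ni 0$, $H^-$. Since $|b_\alpha(z)|$ equals the pseudohyperbolic distance $\rho_{ph}(\alpha,z)$ and $\sigma_L$ is a hyperbolic isometry, the factor identity
\begin{equation*}
|b_\alpha(\sigma_L(z))|=|b_{\sigma_L(\alpha)}(z)|,\quad z,\alpha\in\D
\end{equation*}
holds, where $b_\alpha(z)=(\alpha-z)/(1-\bar\alpha z)$. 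One then defines a ``polarized'' Blaschke product $B^L$ by reflecting across $L$ a suitable subset of the zeros $\alpha_k$ of $B$, chosen so that the modified product satisfies the pointwise inequalities $|B^L|\ge|B|$ on $H^+$ and $|B^L|\le|B|$ on $H^-$. This allows one to identify the hyperbolic polarization $P_L(\Omega_B)$ of $\Omega_B$ toward the component $H^+$ with $\Omega_{B^L}$, and one verifies that $\Omega_B$ is in fact fixed by every such polarization. Since invariance under all admissible polarizations characterizes hyperbolic convexity (a classical result in the theory of symmetrization, due in this generality to Dubinin), the theorem follows.

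\emph{Main obstacle.} The central difficulty is the construction and pointwise analysis of $B^L$. While the factor identity above shows that an individual Blaschke factor transforms cleanly under $\sigma_L$, the product does not polarize factor by factor: for a fixed zero $\alpha$, replacing $\alpha$ with $\sigma_L(\alpha)$ increases $|b|$ at points on the side of $L$ opposite to $\alpha$ but decreases it on the same side, so moving a subset of zeros across $L$ has conflicting effects on $|B|$ in the two half-planes, and aggregating them into the required global inequality demands delicate bookkeeping. Solynin handles this by \emph{continuous dissymmetrization}: the zeros are transported along hyperbolic geodesics orthogonal to $L$, and the monotonicity of $|B(z)|^2-|z|^2$ along this flow is established via an infinitesimal variational computation combined with Schwarz--Pick-type estimates. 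Carrying out that monotonicity argument, and verifying that it yields the required global invariance of $\Omega_B$, is the technical heart of the proof.
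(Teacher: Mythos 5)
First, a point you could not have known but that frames this review: the paper does not prove Theorem D at all. It is imported as an external result --- conjectured by Mej\'{\i}a and Pommerenke \cite{MP05} and proved by Solynin \cite{Sol07} --- and used as a black box in the proof of Theorem \ref{thm:NW}. So your sketch can only be judged on its own terms, and on those terms it has a genuine gap, which you partly concede yourself: all of the analytic content is deferred to Step 2 (``the technical heart''), and, more seriously, the endgame that Step 2 is deferred to rests on a false principle. Hyperbolic convexity is \emph{not} characterized by invariance under polarizations. Already for a single Blaschke factor $B=b_a$, the set $\Omega_B=\{z:\rho(z,0)<\rho(z,a)\}$, where $\rho$ is the pseudohyperbolic distance, is the hyperbolic half-plane bounded by the bisecting geodesic of the pair $0,a$: it is hyperbolically convex, yet it is not fixed by a generic polarization whose axis crosses it; conversely, sets fixed by all polarizations toward the side containing $0$ carry far stronger symmetry than convexity (hyperbolic disks centered at $0$ are the model invariant sets). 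I also know of no theorem of Dubinin of the kind you invoke; his polarization results are inequalities for capacities and Green functions, not a convexity criterion. Two further concrete failures inside Step 2: the identification $P_L(\Omega_B)=\Omega_{B^L}$ cannot hold, because the defining inequality $|z|<|B(z)|$ measures $|B|$ against $|z|=\rho(z,0)$, which is not invariant under the reflection $\sigma_L$, so polarizing the zeros does not polarize the set. And the difficulty you single out as the obstacle --- constructing $B^L$ with $|B^L|\ge|B|$ on $H^+$ and $|B^L|\le|B|$ on $H^-$ --- is in fact the easy part: reflect \emph{every} zero into $H^-$; the reflection lemma $\rho(z,\alpha)\le\rho\big(z,\sigma_L(\alpha)\big)$ for $z,\alpha\in H^+$ then yields both inequalities factor by factor. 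The argument does not break where you say it does; it breaks because $\Omega_B$ solves no extremal problem, so there is no extremality for a polarization or dissymmetrization flow to contradict, and no amount of bookkeeping along such a flow can, by itself, output convexity.

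Step 1 is sound in spirit but incomplete as written, though repairable. Kernel convergence only sees the connected component of $0$, whereas the theorem concerns the whole open set, so connectedness of $\Omega$ must be proved rather than assumed; and transferring hyperbolic convexity to the kernel needs an argument: given $w_1,w_2$ in the kernel, take small closed hyperbolic disks about them, note that the hyperbolically convex $\Omega_n$ eventually contain both disks and hence their hyperbolic convex hull, which is an open neighborhood of the geodesic segment, placing the segment in the kernel. A naive passage to the limit along the segment gives only $|\zeta|\le|\varphi(\zeta)|$; to restore strictness, apply the maximum principle to $z/\varphi(z)$ on a disk about a putative equality point, using $0\in\Omega$ (so that $z/\varphi(z)$ is not a unimodular constant). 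These repairs are routine; the gap in Step 2 is not. As it stands, your proposal establishes only the correct and classical reduction to finite Blaschke products, while the theorem itself remains unproven; and whether Solynin's short argument in \cite{Sol07} actually follows the dissymmetrization scheme you attribute to it should be verified against that paper rather than asserted.
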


We are ready to prove our main theorem.

\begin{proof}[Proof of Theorem \ref{thm:NW}]
\label{proofG1}
Fix $r\in(0,\infty).$
Differentiating the resolvent equation \eqref{G*} in $w\in \D,$
we get the relation
\begin{equation}\label{eq:J'}
\J_r'(w)=\frac1{1+rf'\big(\J_{r}(w)\big)}
\end{equation}
and, in particular, the formula
\begin{equation}\label{eq:J'(0)}
\J_r'(0)=\frac{1}{1+rf'(0)}.
\end{equation}
For a fixed $w\in\D,$ we define $g$ by \eqref{eq:g}.
Then by Lemma \ref{sufficient} and its proof, we observe that
$g$ also belongs to $\G(\D).$
Since $\J_r(w)$ is an interior null point of $g,$ by the
Berkson-Porta formula (condition (i) in Theorem C),
the function $g$ can be represented in the form
\begin{equation*}
g(z) =\big( z-\mathcal{J}_{r}(w)\big) \big( 1-\overline{%
\mathcal{J}_{r}(w)}z\big) p(z) ,
\end{equation*}%
where $\operatorname{Re}p\left( z\right) >0$ for $z\in \D .$
In particular,  \eqref{eq:J'} implies
\begin{equation*}
\operatorname{Re}g'\big(\mathcal{J}_{r}(w)\big) =\left( 1-\left\vert
\mathcal{J}_{r}(w)\right\vert ^{2}\right) \operatorname{Re}p\big(\J_r(w)\big)>0.
\end{equation*}%
On the other hand, by \eqref{eq:g}, $g'\big(\J_{r}(w)\big) =1+rf'\big(\J_{r}(w)\big).$
Hence
$$
\Re\J_r'(w)
=\Re\frac1{1+rf'\big(\J_{r}(w)\big)}
=\Re\frac1{g'\big(\J_{r}(w)\big)}>0,
$$
which proves \eqref{N-W}.

Next we show the hyperbolic convexity of $\mathcal{J}_r.$
We note that the function $h(z)=1+rf(z)/z$ satisfies
$$
|h(z)|\ge\Re h(z)=1+r\,\Re\frac{f(z)}{z}>1.
$$
Therefore, $\varphi(z)=1/h(z)$ satisfies the assumptions of Theorem D.
On the other hand, for $z\in\D,$
\begin{align*}
z\in\J_r(\D)
&\Leftrightarrow
z+rf(z)\in\D \\
&\Leftrightarrow
|z+rf(z)|=|z||h(z)|<1 \\
&\Leftrightarrow
|z|<|\varphi(z)|.
\end{align*}
Thus we now conclude that $\J_r(\D)=\{z\in\D: |z|<|\varphi(z)|\}$
is hyperbolically convex by Theorem D.

To prove the inclusion (\ref{seq}), we note
that a point $z$ in $\D $ belongs to $\mathcal{J}_{r}(\D
) $ if and only if $(I+rf)(z)\subset \D$. Thus it is enough to show
that $\left\vert z+sf\left( z\right) \right\vert \le\left\vert z+rf\left(
z\right) \right\vert $ for $z\in\D$ whenever $r>s\geq 0.$
Indeed, since $\operatorname{Re}\big[f(z) \overline{z}\big]\geq 0$
this inequality follows from
\begin{eqnarray*}
\left\vert z+sf\left( z\right) \right\vert ^{2} &=&\left\vert z\right\vert
^{2}+2s\operatorname{Re}f\left( z\right) \overline{z}+s^{2}\left\vert f\left(
z\right) \right\vert ^{2}\\
&\le&\left\vert z\right\vert ^{2}+2r\operatorname{Re}f\left( z\right) \overline{z}%
+r^{2}\left\vert f\left( z\right) \right\vert ^{2}=\left\vert z+rf\left(
z\right) \right\vert ^{2}.
\end{eqnarray*}%
\end{proof}

\begin{remark}
The inequality \eqref{G1.1} was derived via Solynin's theorem and
a result of Mej\'\i a-Pommerenke.
We can, however, show it directly.
Indeed, by using the above notation, we have
\begin{equation}\label{eq:F}
w\varphi\big(\J_r(w)\big)=\frac{w\J_r(w)}{\J_r(w)+rf\big(\J_r(w)\big)}
=\J_r(w).
\end{equation}
This means that the function $F(z)=w\varphi(z)$ fixes
the point $z=\J_r(w).$
In particular, the Schwarz-Pick lemma implies that
$|w\varphi'(\J_r(w))|=|F'(\J_r(w))|<1,$
where we used the fact that $F$ is not a disk automorphism.
Differentiating both sides of \eqref{eq:F} gives us
$$
w\J_r'(w)
=\frac{w\varphi\big(\J_r(w)\big)}{1-w\varphi'\big(\J_r\big)}
=\frac{\J_r(w)}{1-w\varphi'\big(\J_r\big)}\,,
$$
hence
$$
\Re\frac{w\J_r'(w)}{\J_r(w)}=\Re\frac{1}{1-w\varphi'\big(\J_r\big)}>\frac12\,.
$$
\end{remark}

\section{Inverse L\"owner chains}

Theorem \ref{thm:NW} tells us that $\Omega_r=\J_r(\D),~ 0\le r<\infty,$ is a
decreasing family of domains in the unit disk $\D.$
We can thus introduce some aspects of L\"owner theory.
Indeed, we will give another proof of the above fact later.
The authors believe that it leads to more geometric understandings of the
family of nonlinear resolvents for $f\in\N.$

\begin{definition}
A map $p:\D\times[0,+\infty)\to\C$ is called a Herglotz function of divergence type
if the following three conditions are satisfied:
\begin{enumerate}
\item[(a)] $p_t(z)=p(z,t)$ is analytic in $z\in\D$ and measurable in $t\ge0,$
\item[(b)] $\Re p(z,t)>0~(z\in\D,~ \ae t\ge0),$
\item[(c)] $p(0,t)$ is locally integrable in $t\ge0$ and
$$
\displaystyle\int_0^\infty \Re p(0,t) dt=+\infty.
$$
\end{enumerate}
\end{definition}

Note that the term {\it Herglotz function of order $d$}
is used in \cite{BCD} to mean the function $p(z,t)$ with the divergence condition
being replaced by $L^d([0,\infty))$-convergence in the above definition.

The following result was proved by Becker \cite[Satz 1]{Becker76}.

\begin{ThmE}\label{Thm:LPDE}
Let $p(z,t)$ be a Herglotz function of divergence type.
Then there exists a unique solution $f_t(z)=f(z,t),$ which is analytic and univalent
in $|z|<1$ for each $t\in[0,+\infty)$ and locally absolutely continuous in $0\le t<\infty$ for each $z\in\D,$
to the differential equation
\begin{equation}\label{eq:LPDE}
\dot f(z,t)=z f'(z,t)p(z,t) \quad (z\in\D, \ae t\ge0)
\end{equation}
with the normalization conditions $f_0(0)=0$ and $f_0'(0)=1.$
Moreover, the solution satisfies $f_s\prec f_t$ for $0\le s\le t.$
\end{ThmE}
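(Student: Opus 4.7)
The plan is to construct $f(z,t)$ by the method of characteristics, reducing the L\"owner PDE \eqref{eq:LPDE} to its associated family of L\"owner ODEs and then passing to a normalized limit. For each fixed $s\ge 0$ and $z\in\D$, first consider the Cauchy problem
\[
\dot w(t)=-w(t)\,p(w(t),t),\quad t\ge s,\qquad w(s)=z.
\]
The right-hand side is holomorphic in $w$ for almost every $t$ but only measurable in $t$, so classical Picard theory does not apply; I would instead invoke Carath\'eodory's theorem for absolutely continuous solutions. The a priori bound
\[
\frac{d}{dt}|w(t)|^{2}=-2|w(t)|^{2}\,\Re p(w(t),t)\le 0
\]
(valid at Lebesgue points of $t\mapsto p(w(t),t)$) keeps the solution inside $\D$ and yields a global flow $\phi_{s,t}(z):=w(t)$.

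The next step is to establish the fundamental properties of these transition maps: $\phi_{s,t}\in\hol(\D,\D)$ for $s\le t$, $\phi_{s,s}=I$, the cocycle identity $\phi_{r,t}\circ\phi_{s,r}=\phi_{s,t}$ for $s\le r\le t$, $\phi_{s,t}(0)=0$, and
\[
\phi_{s,t}'(0)=\exp\Bigl(-\int_{s}^{t}p(0,\tau)\,d\tau\Bigr).
\]
Holomorphy in $z$ comes from differentiating the ODE in the initial data, and univalence of $\phi_{s,t}$ follows from the cocycle identity together with $\phi_{s,t}'(0)\ne 0$. Condition \textit{(c)} then forces $|\phi_{s,t}'(0)|\to 0$ as $t\to\infty$, a point that will be decisive in the construction of the chain.

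I would then define the candidate L\"owner chain by the rescaled limit
\[
f(z,t):=\lim_{u\to\infty}\exp\Bigl(\int_{0}^{u}p(0,\tau)\,d\tau\Bigr)\phi_{t,u}(z).
\]
The auxiliary maps $h_{t,u}(z):=\exp(\int_{t}^{u}p(0,\tau)\,d\tau)\phi_{t,u}(z)$ are univalent with $h_{t,u}(0)=0$ and $h_{t,u}'(0)=1$, hence form a normal family by the Koebe $1/4$-theorem and the distortion theorem. Monotone growth of the images $h_{t,u}(\D)$ in the Carath\'eodory-kernel sense, coming from the cocycle identity, would be used to upgrade subsequential convergence to a full limit and to show the limit is non-constant; Hurwitz' theorem then yields univalence of $f(\cdot,t)$. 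The subordination $f_{s}\prec f_{t}$ for $s\le t$ is immediate from the relation $f_{s}=f_{t}\circ\phi_{s,t}$, obtained by letting $u\to\infty$ in the cocycle identity. Absolute continuity in $t$ and the PDE \eqref{eq:LPDE} itself then follow by differentiating $f(\phi_{0,t}(z),t)=f(z,0)$ in $t$ and substituting the ODE for $\dot\phi_{0,t}$.

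The main obstacle I expect is upgrading the normalized family $\{h_{t,u}\}_{u\ge t}$ to a genuine (rather than subsequential) limit with the correct regularity and non-degeneracy. This is exactly where the divergence condition \textit{(c)} becomes essential: it ensures that the normalizing factor $\exp(\int_{0}^{u}p(0,\tau)\,d\tau)$ tends to infinity, that the images $h_{t,u}(\D)$ exhaust the plane in the Carath\'eodory-kernel sense, and that the limit $f(\cdot,t)$ is univalent rather than collapsing to a constant. A clean kernel-convergence/monotone-domain argument therefore replaces the naive compactness heuristic, and this is the technical heart of Becker's theorem.
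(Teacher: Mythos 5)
The paper offers no proof of Theorem E: it is quoted verbatim from Becker \cite[Satz 1]{Becker76} (only the derivative formula \eqref{eq:a1} is extracted from Becker's argument), so your proposal can only be compared with the standard proof in the literature. Your overall strategy is exactly that one: solve the L\"owner ODE $\dot w=-w\,p(w,t)$ by Carath\'eodory's existence theorem, assemble the evolution family $\phi_{s,t}$ with $\phi_{s,t}'(0)=\exp\bigl(-\int_s^t p(0,\tau)\,d\tau\bigr)$, and recover the chain as the rescaled limit of $h_{t,u}=\exp\bigl(\int_t^u p(0,\tau)\,d\tau\bigr)\phi_{t,u}$; the subordination $f_s=f_t\circ\phi_{s,t}$ and the PDE then follow as you indicate. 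So the skeleton is sound and is the intended route.

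There are, however, two genuine gaps. First, the mechanism you propose for the convergence of $h_{t,u}$ as $u\to\infty$ fails: the images $h_{t,u}(\D)$ are \emph{not} in general increasing in $u$, so there is no ``monotone growth in the Carath\'eodory-kernel sense'' to invoke. The nested domains in L\"owner theory are the chain domains $f_t(\D)$ as $t$ varies, not the approximants $h_{t,u}(\D)$ as the auxiliary parameter $u$ varies; the cocycle identity gives $h_{t,v}=\bigl[\exp\bigl(\int_u^v p(0,\tau)\,d\tau\bigr)\phi_{u,v}\bigr]\circ h_{t,u}\cdot$(up to the scalar bookkeeping), and a normalized univalent map need not cover the domain it is applied to. The correct argument is a Cauchy estimate: by the growth theorem $|\phi_{t,u}(z)|\le \bigl|\exp\bigl(-\int_t^u p(0,\tau)\,d\tau\bigr)\bigr|\,|z|(1-|z|)^{-2}\to0$ (here the divergence condition enters), and the uniform bound $|g(w)-w|=O(|w|^2)$ for normalized univalent $g$ yields $h_{t,v}(z)-h_{t,u}(z)=O\bigl(\exp\bigl(-\int_t^u \Re p(0,\tau)\,d\tau\bigr)\bigr)$ locally uniformly for $u\le v$; this produces a genuine limit with $f_t'(0)=\exp\bigl(\int_0^t p(0,\tau)\,d\tau\bigr)\ne0$, whence univalence by Hurwitz. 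Second, you never address uniqueness, which is part of the statement (and is precisely why the paper's remark after Theorem E stresses that uniqueness fails without the univalence hypothesis): one must show that any normalized, univalent, locally absolutely continuous solution $\tilde f_t$ of \eqref{eq:LPDE} satisfies $\tilde f_s=\tilde f_t\circ\phi_{s,t}$ (the map $t\mapsto\tilde f(\phi_{s,t}(z),t)$ has vanishing derivative for almost every $t$), hence $\tilde f_t'(0)=\exp\bigl(\int_0^t p(0,\tau)\,d\tau\bigr)$, and then let $t\to\infty$ in $\tilde f_0=\tilde f_t\circ\phi_{0,t}$, using the coefficient bounds for the univalent maps $\tilde f_t/\tilde f_t'(0)$, to conclude $\tilde f_0=f_0$ and therefore $\tilde f_t=f_t$ by the identity theorem. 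Without these two repairs the proposal does not yet prove the theorem.
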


Here and hereafter, we write
$$
\dot f(z,t)=\frac{\partial}{\partial t}f(z,t),\qquad
f'(z,t)=\frac{\partial}{\partial z}f(z,t).
$$
In addition, in the proof of Theorem~E, Becker  showed  the formula
\begin{equation}\label{eq:a1}
f'(0,t)=\exp\left(\int_0^t p(0,t)dt\right),
\end{equation}
so that $|f'(0,t)|=\exp\big(\int_0^t\Re p(0,t)dt\big)\to+\infty$
as $t\to+\infty.$  We remark that the uniqueness assertion is no longer valid if we drop
the univalence condition on $f_t.$
For instance, $\tilde f(z,t)=\Phi(f(z,t))$ satisfies \eqref{eq:LPDE}
as well as $\tilde f(0,0)=0$ and $\tilde f'(0,0)=1$ when
$\Phi$ is an entire function with $\Phi(0)=0$ and $\Phi'(0)=1.$

We now make a definition after Betker \cite{Bet92}.

\begin{definition}
A family of analytic functions $g_t(z)=g(z,t)~(0\le t<\infty)$ on the unit disk $\D$
is called an {\it inverse L\"owner chain} if the following conditions are satisfied:
\begin{enumerate}
\item[(i)]
$g_t:\D\to\C$ is univalent for each $t\ge 0,$
\item[(ii)]
$g_t\prec g_s$ whenever $0\le s\le t,$
\item[(iii)]
$b(t)=g_t'(0)$ is locally absolutely continuous in $t\ge0$ and
$b(t)\to 0$ as $t\to\infty.$
\end{enumerate}
\end{definition}

Note that condition (ii) means that $g_t(\D)\subset g_s(\D)$
and $g_t(0)=g_s(0)$ for $0\le s\le t.$
Condition (iii) implies that $g_t(z)\to 0$  locally uniformly on $|z|<1$ as $t\to+\infty.$
The following lemma gives us sufficient conditions for $g(z,t)$
to be an inverse L\"owner chain.

\begin{lemma}\label{lem:IPDE}
Let $g_t(z)=g(z,t)$  be a family of analytic functions on $\D$ for $0\le t<\infty$
with the following properties:
\begin{enumerate}
\item[1)] $g_t$ is univalent on $\D$ for each $t\ge0,$
\item[2)] $g(0,s)=g(0,t)$ for $0\le s\le t,$
\item[3)] $g(z,0)=z$ for $z\in\D,$
\item[4)] $g(z,t)$ is locally absolutely continuous in $t\ge0$ for each $z\in\D,$
\item[5)] the differential equation
\begin{equation}\label{eq:IPDE}
\dot g(z,t)=-z g'(z,t)p(z,t) \quad (z\in\D, \ae t\ge0).
\end{equation}
holds for a Herglotz function $p(z,t)$ of divergence type.
\end{enumerate}
Then $g_t(\D)\subset g_s(\D)$ for $0\le s\le t.$
Also, $g_t'(0)$ is locally absolutely continuous in $t\ge0$ and
tends to $0$ as $t\to+\infty.$ 
\end{lemma}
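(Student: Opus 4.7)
The lemma has two essentially independent parts: the nesting $g_t(\D)\subset g_s(\D)$ for $0\le s\le t$, and the behaviour of $a(t):=g_t'(0)$. The plan is to handle $a(t)$ directly by differentiating the PDE \eqref{eq:IPDE} at $z=0$, and to obtain the nesting via the method of characteristics applied to the same PDE.

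For $a(t)$, conditions 2) and 3) give $g(0,t)\equiv 0$ and $g_0'(0)=1$. Differentiating \eqref{eq:IPDE} in $z$ and setting $z=0$ kills every term carrying an explicit factor of $z$ and leaves the linear ODE $\dot a(t)=-p(0,t)a(t)$ a.e., with $a(0)=1$. Hence
\[
a(t)=\exp\Bigl(-\int_0^t p(0,\tau)\,d\tau\Bigr),
\]
which is locally absolutely continuous in $t$ since $p(0,\tau)$ is locally integrable by condition (c). The modulus $|a(t)|=\exp\bigl(-\int_0^t\Re p(0,\tau)\,d\tau\bigr)$ then tends to $0$ as $t\to\infty$ by the divergence clause in (c).

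For the nesting, I would use the method of characteristics. The characteristic ODE associated to \eqref{eq:IPDE} is $\dot w(\tau)=w(\tau)p(w(\tau),\tau)$, and a direct chain-rule computation shows that $g(w(\tau),\tau)$ is constant along any such curve. Given $z\in\D$ and $0\le s\le t$, I would solve the characteristic ODE backward from the terminal condition $w(t)=z$ down to $\tau=s$. Since $\Re p>0$, we have $\frac{d}{d\tau}|w(\tau)|^2=2|w(\tau)|^2\,\Re p(w(\tau),\tau)\ge 0$, so $|w|$ is non-decreasing in $\tau$; hence $|w(\tau)|\le|z|<1$ throughout $[s,t]$, and the backward trajectory stays inside $\D$. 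Constancy along the characteristic then yields $g(z,t)=g(w(s),s)\in g_s(\D)$, which is exactly the required inclusion.

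The main technical obstacle is justifying this characteristic step under the only-measurable-in-$t$ regularity of $p$. One needs Carath\'eodory existence and uniqueness for the backward ODE $\dot w=wp$, together with the a.e.\ chain rule $\frac{d}{d\tau}g(w(\tau),\tau)=\dot g(w(\tau),\tau)+g'(w(\tau),\tau)\dot w(\tau)$. Both are standard once one observes that, by classical growth estimates for holomorphic functions with positive real part, $|p(w,\tau)|$ is bounded on each subdisk $\{|w|\le r\}$ by a constant multiple of $|p(0,\tau)|+\Re p(0,\tau)$, which is locally integrable by (c); this provides the locally integrable Lipschitz control in $w$ needed to invoke the Carath\'eodory theorem, while condition 4) together with uniform control of $g'$ on compact subdisks (from analyticity in $z$) then validates the chain rule.
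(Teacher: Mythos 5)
Your argument is correct in substance but follows a genuinely different route from the paper. The paper never works with the PDE \eqref{eq:IPDE} directly: it fixes $T>0$, reverses time by setting $f(z,t)=g(z,T-t)$ for $0\le t\le T$ and continues the family by $f(z,t)=e^{t-T}z$ for $t\ge T$, checks that this solves the forward L\"owner equation with Herglotz function $q(z,t)=p(z,T-t)$ (resp.\ $1$) of divergence type, and then invokes Becker's theorem (Theorem E) to conclude that $f_t/f'(0,0)$ is a L\"owner chain; the subordination $f_s\prec f_t$ translates back into $g_{T-t}\prec g_{T-s}$, and Becker's formula \eqref{eq:a1} gives $g_T'(0)=\exp\bigl(-\int_0^T p(0,t)\,dt\bigr)$, the same expression you obtain by differentiating \eqref{eq:IPDE} at the origin. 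The paper's reduction absorbs all Carath\'eodory-type regularity issues into the cited theorem and, as a bonus, the same time-reversed chain immediately yields the quasiconformal extension of Corollary \ref{cor_Le4} via Betker's theorem; your characteristics argument buys a self-contained proof that does not need Becker's existence and uniqueness theorem at all. One correction to your technical discussion: the uniform control of $g'(z,t)$ on compact subdisks needed for the a.e.\ chain rule along the characteristic does not follow from ``analyticity in $z$'' alone, since that gives nothing uniform in $t$; you should obtain it from the Koebe distortion theorem applied to the univalent functions $g_t$ together with the bound $|g_t'(0)|=\exp\bigl(-\int_0^t\Re p(0,\tau)\,d\tau\bigr)\le 1$ supplied by the first half of your argument --- so the two halves are not quite as independent as you claim, and the $g_t'(0)$ computation must come first.
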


\begin{proof}
We follow Betker's method in \cite{Bet92}.
First we note that $g(0,t)=g(0,0)=0$ for $t\ge0$ by conditions 2) and 3).
Fix any $T>0$ and define a new family of functions $f_t(z)=f(z,t)$ by
$$
f(z,t)=\begin{cases}
g(z,T-t) &\quad (0\le t\le T), \\
e^{t-T}z &\quad (T\le t<\infty).
\end{cases}
$$
Then the family $f(z,t)$  satisfies the L\"owner equation
$$
\dot f(z,t)=z f'(z,t)q(z,t) \quad (z\in\D, \ae t\ge0),
$$
where
$$
q(z,t)=\begin{cases}
p(z,T-t) &\quad (0\le t\le T), \\
1 &\quad (T<t<\infty).
\end{cases}
$$
It is easy to check that $q(z,t)$ is a Herglotz function of divergence type.
Now Theorem E implies that
$f(z,t)/f'(0,0)$ is a  L\"owner chain.
In particular, for $0\le s\le t\le T,$ we have $f_s\prec f_t;$
in other words, $g_{T-t}\prec g_{T-s}$ holds.
Since $T$ is arbitrary, we have obtained the  subordination.
Finally, we observe that $f_t'(0)=g_{T-t}'(0)$ is absolutely continuous in $0\le t\le T$
and, in view of \eqref{eq:a1}, that
$$
\frac1{g_T'(0)}=\frac1{f_0'(0)}=\frac{f_T'(0)}{f_0'(0)}=\exp\left(\int_0^T q(0,t)dt\right).
$$
Hence,
$$
g_T'(0)
=\exp\left(-\int_0^Tq(0,t)dt\right)
=\exp\left(-\int_0^Tp(0,T-t)dt\right)
=\exp\left(-\int_0^Tp(0,t)dt\right)
$$
which tends to $0$ as $T\to+\infty$ since $p(z,t)$ is of divergence type.
Thus the assertion has been proved.
\end{proof}

As a corollary of the proof, we also have the following result,
which may be of independent interest.

\begin{corollary}\label{cor_Le4}
Under the assumptions of Lemma \ref{lem:IPDE}, we suppose, in addition, that the inequality
\begin{equation}\label{eq:arg}
\big|\arg p(z,t)\big|<\frac{\pi\alpha}{2},\quad z\in\D, \ae t\ge0,
\end{equation}
holds for a constant $0<\alpha<1.$ Then the conformal mapping $g_t$ on $\D$
extends to a $k$-quasiconformal mapping of $\C$ for each $t\ge0,$ where $k=\sin(\pi\alpha/2).$
\end{corollary}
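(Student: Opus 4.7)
The plan is to reduce the statement to the classical quasiconformal-extension theorem for standard (forward) L\"owner chains, via the time-reversal trick already used in the proof of Lemma \ref{lem:IPDE}. Fix any $T>0$ and set
\begin{equation*}
f(z,t)=\begin{cases}g(z,T-t), & 0\le t\le T,\\ e^{t-T}\,z, & t\ge T,\end{cases}
\end{equation*}
followed by $\tilde f(z,t)=f(z,t)/f'(0,0)$. Exactly as in the proof of Lemma \ref{lem:IPDE}, $\tilde f$ is then a standard L\"owner chain solving $\dot{\tilde f}=z\tilde f'q$ with a Herglotz function of divergence type
\begin{equation*}
q(z,t)=\begin{cases}p(z,T-t), & 0\le t\le T,\\ 1, & t\ge T.\end{cases}
\end{equation*}
Since the sector hypothesis \eqref{eq:arg} on $p$ transfers verbatim to $p(\cdot,T-\cdot)$ on $[0,T]$, and the constant value $q\equiv 1$ trivially satisfies the inequality on $[T,\infty)$, we obtain the uniform bound $|\arg q(z,t)|<\frac{\pi\alpha}{2}$ for all $z\in\D$ and a.e.\ $t\ge 0$.

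Next I would invoke the sector version of Becker's quasiconformal-extension theorem: if the Herglotz function $q$ of a standard L\"owner chain takes values in the sector $|\arg q|\le\frac{\pi\alpha}{2}$ for a fixed $0<\alpha<1$, then the initial function $\tilde f_0$ extends to a $k$-quasiconformal self-homeomorphism of the Riemann sphere with $k=\sin(\pi\alpha/2)$. The extension is the radial one, $\hat f(z)=\tilde f(z/|z|,\log|z|)$ for $|z|>1$ and $\hat f(z)=\tilde f(z,0)$ for $|z|\le 1$; its Beltrami coefficient in the exterior disk, computed by differentiating the L\"owner equation, takes the form $e^{2i\theta}(q-1)/(q+1)$ (evaluated at $(z/|z|,\log|z|)$), and analysing this expression under the sector constraint---possibly after a suitable reparametrization of the radial variable---produces precisely the dilatation bound $k=\sin(\pi\alpha/2)$.

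Applied to $\tilde f(z,t)$, this yields a $k$-quasiconformal extension of $\tilde f_0=g_T/g_T'(0)$. Since $g_T'(0)\ne 0$, post-composition with the affine conformal map $w\mapsto g_T'(0)w$ preserves the dilatation, so $g_T$ itself admits the required extension with the same $k$. Because $T>0$ was arbitrary, the conclusion holds for every $g_t$, $t\ge 0$ (with $g_0=\mathrm{id}$ trivial).

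The principal obstacle, in my view, is locating and correctly applying the sector refinement of Becker's theorem. The textbook version requires the disk bound $|(q-1)/(q+1)|\le k<1$, which the sector bound on $\arg q$ alone does \emph{not} imply: the sector is unbounded, and its M\"obius image under $w\mapsto(w-1)/(w+1)$ is a lens whose closure touches $\pm 1$. The sector refinement therefore demands a genuinely sharper estimate of the Beltrami coefficient of the extension, and it is precisely this optimization that produces the constant $\sin(\pi\alpha/2)$ rather than a worse dilatation.
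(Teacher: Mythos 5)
Your proposal follows essentially the same route as the paper's proof: form the time-reversed chain $f(z,t)$ from the proof of Lemma \ref{lem:IPDE}, observe that its Herglotz function $q$ inherits the sector bound $|\arg q|<\pi\alpha/2$, and then invoke the sector version of the quasiconformal extension theorem for L\"owner chains. The one ingredient you were unsure how to locate is precisely what the paper cites, namely Betker's theorem (Application 2 in \cite[p.~110]{Bet92} --- note Betker, not Becker), which supplies exactly the dilatation bound $k=\sin(\pi\alpha/2)$ under the sector hypothesis, so your reduction is complete and your concerns about the failure of the naive disk bound $|(q-1)/(q+1)|\le k$ are well placed but already resolved in that reference.
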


Here and hereafter, for a constant $0\le k<1,$
a mapping $f:\C\to\C$ is called $k$-quasiconformal if
$f$ is a homeomorphism in the Sobolev class $W^{1,2}_{\mathrm loc}(\C)$ and if it satisfies
$|\partial_{\bar z}f|\le k|\partial_zf|$ almost everywhere on~$\C.$

\begin{proof}
For an arbitrary $T>0,$ we consider the family $f_t(z)=f(z,t)$ as in the above proof.
Then $|\arg q(z,t)|<\pi\alpha/2$ as well.
Now Betker's theorem (see Application 2 in \cite[p.~110]{Bet92})
implies that $f_0=g_T$ extends to a $k$-quasiconformal automorphism of $\C.$
\end{proof}

Let $f\in\N.$
That is to say, $f(z)$ is an analytic function on $\D$ such that
$f(0)=0$ and $\Re \big[f(z)/z\big]>0.$
Recall that the nonlinear resolvent $\J_r$ is defined for $f$ by \eqref{G*}.
Consider a function $p$ defined by
\begin{equation}\label{eq:p(w,r)}
p(w,r)=\frac1r\left(1-\frac{\J_r(w)}{w}\right),\quad w\in\D, r>0.
\end{equation}
We observe that the inequality $\Re p(w,r)>0$ holds because $|\J_r(w)/w|<1$ by the
Schwarz lemma.
We may set
$$
p(w,0)=\lim_{r\to0^+}\frac1r\cdot\frac{w-\J_r(w)}{w}
=\frac{f(w)}{w},
$$
so that the family $p(w,r)$ is continuous in $0\le r<\infty.$
By using Lemma \ref{lem:IPDE}, we can show the following assertion, which also
implies the inclusion relation \eqref{seq} in Theorem \ref{thm:NW}.

\begin{proposition}
The family $\J_r(w)=\J(w,r), r\ge0,$ is an inverse L\"owner chain with
the Herglotz function $p(w,r)$ of divergence type given in \eqref{eq:p(w,r)}.
In particular, $\J_r(\D)\subset\J_s(\D)$ for $0\le s\le r.$
\end{proposition}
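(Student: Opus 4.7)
The plan is to apply Lemma \ref{lem:IPDE} to the family $g(w,r) := \J_r(w)$, so the task reduces to verifying its five hypotheses---with the Herglotz candidate $p$ taken from \eqref{eq:p(w,r)}---and then reading off the conclusion.

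Most of the hypotheses would be nearly immediate. Univalence of each $\J_r$ on $\D$ is given by Theorem \ref{thm:NW}, since the Noshiro-Warschawski condition \eqref{N-W} forces univalence. Because $f(0)=0$, the unique solution of $z + rf(z) = 0$ is $z = 0$, so $\J_r(0) = 0$ for every $r \ge 0$, handling conditions 2) and 3). For the local absolute continuity in 4), I would apply the implicit function theorem to the resolvent equation: formula \eqref{eq:J'} shows that $1 + rf'(\J_r(w)) = 1/\J_r'(w)$ never vanishes, which yields joint smoothness of $\J_r(w)$ in $(w, r)$. For the PDE in 5), I would differentiate $\J_r(w) + rf(\J_r(w)) = w$ in $r$ and combine with \eqref{eq:J'} to get $\dot \J_r(w) = -\J_r'(w)\, f(\J_r(w))$; since the resolvent equation yields $f(\J_r(w)) = (w - \J_r(w))/r = w\, p(w,r)$, this is exactly \eqref{eq:IPDE}.

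The only substantive step is verifying that $p$ defined by \eqref{eq:p(w,r)} is a Herglotz function of divergence type. Analyticity in $w$ and measurability (indeed continuity) in $r$ are clear. Since $\J_r$ is a holomorphic self-map of $\D$ fixing $0$, the Schwarz lemma yields $|\J_r(w)/w| \le 1$, and after ruling out the exceptional case of a rotation---which would force $\Re f'(0) \le 0$, contradicting $f \in \N$---one gets strict inequality; hence $\Re[1 - \J_r(w)/w] \ge 1 - |\J_r(w)/w| > 0$, giving $\Re p(w, r) > 0$ for $r > 0$ (at $r = 0$ we extend by continuity to $p(w, 0) = f(w)/w$, whose real part is positive by definition of $\N$). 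The main point worth singling out is the divergence condition: using \eqref{eq:J'(0)} with $\beta = f'(0)$, one computes
$$
p(0, r) = \frac{1}{r}\left(1 - \frac{1}{1 + r\beta}\right) = \frac{\beta}{1 + r\beta},
$$
so $\Re p(0, r) = (\Re \beta + r|\beta|^2)/|1 + r\beta|^2$ behaves like $1/r$ as $r \to \infty$, which makes $\int_0^\infty \Re p(0, r)\, dr = +\infty$. With all hypotheses of Lemma \ref{lem:IPDE} verified, the inverse L\"owner chain property and the inclusion $\J_r(\D) \subset \J_s(\D)$ for $0 \le s \le r$ follow at once.
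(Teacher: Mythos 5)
Your proposal is correct and follows essentially the same route as the paper: differentiate the resolvent equation in $r$, combine with \eqref{eq:J'} to obtain the inverse L\"owner PDE \eqref{eq:IPDE} with $p$ from \eqref{eq:p(w,r)}, check $\Re p>0$ via the Schwarz lemma and the divergence condition via $p(0,r)=f'(0)/(1+rf'(0))$, and invoke Lemma \ref{lem:IPDE}. You are merely more explicit than the paper in verifying the routine hypotheses (univalence, $\J_r(0)=0$, local absolute continuity), and your asymptotic estimate $\Re p(0,r)\sim 1/r$ replaces the paper's closed-form evaluation $\int_0^T\Re p(0,r)\,dr=\log|1+Tf'(0)|$; both are fine.
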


\begin{proof}
By \eqref{G*}, we have
$$
f\big(\J(w,r)\big)=\frac{w-\J(w,r)}{r}=wp(w,r).
$$
Differentiating \eqref{G*} with respect to $r,$ we obtain
\begin{equation*}
\big[1+rf'\big(\J(w,r)\big)\big]\dot\J(w,r)+f(\J(w,r))=0.
\end{equation*}
Combining  this with \eqref{eq:J'}, we have
$$
\dot\J(w,r)=-\frac{f\big(\J(w,r)\big)}{1+rf'\big(\J(w,r)\big)}
=-w\J'(w,r)p(w,r).
$$
Since $p(0,r)=f'(0)\J_r'(0)=f'(0)/(1+rf'(0)),$ we see that
$$
\int_0^T \Re p(0,r)dr
=\Re \int_0^T\frac{f'(0)}{1+rf'(0)}dr
=\log|1+Tf'(0)|\to\infty\quad (T\to+\infty).
$$
Hence $p(w,r)$ is a Helglotz function of divergence type.
Now Lemma \ref{lem:IPDE} implies that $\J_r(w)$
forms an inverse L\"owner chain.
\end{proof}

We now state a quasiconformal extension result for the nonlinear resolvent
$\J_r(w)$ as an application of the L\"owner theory approach.

\begin{theorem}\label{thm:qc}
Suppose that $f\in\N$ satisfies the inequality
\begin{equation}\label{eq:sector2}
\left|\arg \frac{f(z)}z\right|<\frac{\pi\alpha}2,\quad z\in\D,
\end{equation}
for some constant $0<\alpha<1.$
Then the nonlinear resolvent $\J_r:\D\to\D$ for $f$ extends to a
$k$-quasiconformal mapping of $\C$ for every $r\ge 0,$
where $k=\sin(\pi\alpha/2).$
\end{theorem}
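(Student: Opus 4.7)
The plan is to apply Corollary~\ref{cor_Le4} to the inverse L\"owner chain $\{\J_r\}_{r\ge 0}$ constructed in the Proposition immediately preceding the theorem. That proposition already verifies the hypotheses of Lemma~\ref{lem:IPDE} with the Herglotz function of divergence type
$$
p(w,r)=\frac{1}{r}\left(1-\frac{\J_r(w)}{w}\right),
$$
so the only new ingredient needed is the sector condition $|\arg p(w,r)|<\pi\alpha/2$ required by Corollary~\ref{cor_Le4}. The case $r=0$ is trivial since $\J_0=I$ is conformal.

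Fix $r>0.$ The next step is to rewrite $p$ in a form that exposes the hypothesis on $f$. Dividing the resolvent equation $w=\J_r(w)+rf(\J_r(w))$ by $\J_r(w)$ gives $w/\J_r(w)=1+rg(w),$ where $g(w):=f(\J_r(w))/\J_r(w).$ Substituting into the definition of $p$ yields
$$
p(w,r)=\frac{g(w)}{1+rg(w)}=\frac{1}{r+1/g(w)}.
$$
Evaluating hypothesis \eqref{eq:sector2} at $z=\J_r(w)\in\D$ gives $|\arg g(w)|<\pi\alpha/2.$

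The main obstacle is transferring this sector bound on $g$ to the same bound on $p,$ since $p$ is $f(\J_r(w))$ divided by $w$ rather than by $\J_r(w).$ The reciprocal form $p=1/(r+1/g)$ makes this transparent. Indeed, because $\alpha<1,$ $g(w)$ lies in the open right half-plane, hence so does $1/g(w);$ adding the nonnegative real $r$ to $1/g(w)$ increases its real part without altering its imaginary part, which can only reduce the modulus of its argument. Therefore
$$
\left|\arg\bigl(r+1/g(w)\bigr)\right|\le\left|\arg(1/g(w))\right|=|\arg g(w)|<\frac{\pi\alpha}{2},
$$
and since reciprocation preserves $|\arg|,$ we conclude $|\arg p(w,r)|<\pi\alpha/2.$ Corollary~\ref{cor_Le4} now delivers the $k$-quasiconformal extension of $\J_r$ to all of $\C$ with $k=\sin(\pi\alpha/2),$ completing the proof.
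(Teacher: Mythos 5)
Your proposal is correct and follows the same overall strategy as the paper: reduce the theorem to Corollary~\ref{cor_Le4} via the inverse L\"owner chain structure established in the preceding Proposition, so that the only thing left to verify is the sector bound $|\arg p(w,r)|<\pi\alpha/2$ for the Herglotz function $p(w,r)=\frac1r\big(1-\J_r(w)/w\big)$. The paper isolates exactly this step as a separate lemma and, after arriving at the same identity $rp(w,r)=\frac{rq(z)}{1+rq(z)}$ with $q(z)=f(z)/z$ and $z=\J_r(w)$, proves the sector bound by computing the image of the sector $S_\alpha$ under $\zeta\mapsto\zeta/(1+\zeta)$ explicitly: it is a lens-shaped domain $W_\alpha$ bounded by two circular arcs through $0$ and $1$ meeting at angle $\pi\alpha$, and one checks geometrically that $W_\alpha\subset S_\alpha$. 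Your verification of the same inequality is more elementary: writing $p=1/(r+1/g)$ and observing that reciprocation preserves $|\arg|$ while adding the nonnegative real $r$ to a point of the right half-plane can only decrease $|\arg|$, you avoid the lens-domain computation entirely. Both arguments are sound; yours is shorter, while the paper's gives the precise image region, which is sharper information than is needed for the theorem.
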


\begin{remark}\label{extension}
The condition \eqref{eq:sector2} is known to be equivalent to that the
semigroup $\big\{F_{t}\big\}_{t\geq 0}$ in $\hol(\D )$
generated by $f(z)$ can be analytically extended to the sector
$\{t\in\C: \left\vert \arg t\right\vert <\pi(1-\alpha)/2\}$ in the parameter $t$
(see \cite{E-S-Ta}).
\end{remark}

By virtue of  Corollary \ref{cor_Le4}, it is enough to show the following lemma.

\begin{lemma}
Under the assumptions of Theorem \ref{thm:qc}, the inequality
$$
\left|\arg p(w,r)\right|<\frac{\pi\alpha}2,\quad w\in\D,~ 0\le r<+\infty,
$$
holds, where $p(w,r)$ is given in \eqref{eq:p(w,r)}.
\end{lemma}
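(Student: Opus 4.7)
The plan is to express $p(w,r)$ in terms of the sector-valued quantity $f(\zeta)/\zeta$ (with $\zeta=\J_r(w)$) and then show that the resulting M\"obius-type combination stays inside the sector.

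First, I would use the resolvent identity $w=\zeta+rf(\zeta)$ with $\zeta=\J_r(w)$ to rewrite
\[
p(w,r)=\frac{w-\J_r(w)}{rw}=\frac{f(\zeta)}{w}.
\]
Set $\eta=rf(\zeta)/\zeta$, so that $w=\zeta(1+\eta)$ and therefore
\[
p(w,r)=\frac{f(\zeta)}{\zeta(1+\eta)}=\frac{\eta}{r(1+\eta)}\qquad (r>0).
\]
Since $\zeta\in\D$, hypothesis \eqref{eq:sector2} gives $|\arg(f(\zeta)/\zeta)|<\pi\alpha/2$, and multiplication by the positive real number $r$ preserves this, so $\eta$ lies in the sector $S_\alpha=\{z\in\C\setminus\{0\}:|\arg z|<\pi\alpha/2\}$.

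The heart of the argument is the geometric claim that the map $\eta\mapsto\eta/(1+\eta)$ sends $S_\alpha$ into itself. Writing $\eta=\rho e^{i\theta}$ with $\rho>0$ and $|\theta|<\pi\alpha/2<\pi/2$, one has $\Re(1+\eta)=1+\rho\cos\theta>0$, so
\[
\arg(1+\eta)=\arctan\!\left(\frac{\rho\sin\theta}{1+\rho\cos\theta}\right),
\]
which has the same sign as $\theta$ and satisfies $|\tan\arg(1+\eta)|<|\tan\theta|$. Consequently $\arg\eta$ and $\arg(1+\eta)$ have the same sign and the latter is strictly smaller in absolute value, so
\[
\Bigl|\arg\frac{\eta}{1+\eta}\Bigr|=|\arg\eta-\arg(1+\eta)|<|\theta|<\frac{\pi\alpha}{2}.
\]
Because $r>0$ contributes nothing to the argument, this yields the desired bound for $p(w,r)$ when $r>0$. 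The case $r=0$ is immediate, since $p(w,0)=f(w)/w$ and the assumption \eqref{eq:sector2} applies directly at $z=w$.

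The only real obstacle is the sector-preservation step; the rest is bookkeeping from the resolvent equation. The small subtlety is to notice that one should not try to control $\arg(\J_r(w)/w)$ on its own — this quantity is not a priori confined to $S_\alpha$ — but rather to combine $\J_r(w)/w$ with $f(\zeta)/\zeta$ through the algebraic identity above, at which point the problem reduces to the planar geometric fact about $\eta\mapsto\eta/(1+\eta)$.
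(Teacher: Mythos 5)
Your proof is correct and follows essentially the same route as the paper: both reduce, via the resolvent identity, to showing that $rp(w,r)=\eta/(1+\eta)$ with $\eta=rf(\zeta)/\zeta$ lying in the sector $S_\alpha$, and then invoke the fact that $\eta\mapsto\eta/(1+\eta)$ maps $S_\alpha$ into itself. The only difference is how that planar fact is verified — the paper identifies the image of $S_\alpha$ as a lens-shaped intersection of two disks contained in $S_\alpha$, while you argue directly and elementarily that $\arg(1+\eta)$ has the same sign as $\arg\eta$ and is no larger in modulus (note only that your strict chain $|\arg\eta-\arg(1+\eta)|<|\theta|$ degenerates harmlessly when $\theta=0$).
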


\begin{proof}
Put $q(z)=f(z)/z.$
Since the relation $z=\J_r(w)$ for $z,w\in\D$ is equivalent to the equation
$z+rf(z)=w,$ we observe that
$$
rp(w,r)=1-\frac zw=1-\frac1{1+rq(z)}=\frac{rq(z)}{1+rq(z)}\,.
$$
It is easy to verify that the sector $S_\alpha=\{\zeta: |\arg\zeta|<\pi\alpha/2\}$
is mapped univalently onto the lens-shaped domain $W_\alpha$ by the function $\zeta/(1+\zeta),$
where $W_\alpha$ is the intersection of the two disks described by
$$
\left\{\omega: \left|2\omega-\left(1\pm i\cot\frac{\pi\alpha}2\right)\right|
<\frac1{\sin(\pi\alpha/2)}\right\}.
$$
Since the boundary circles of the two disks are symmetric with respect to
the real axis and intersect at the points $0$ and $1$ with angle $\pi\alpha,$
the domain $W_\alpha$ is contained in the sector $S_\alpha.$
We now conclude that $rp(w,r)$ is contained in the sector $S_\alpha$
and hence so is $p(w,r)$ as required, because $rq(z)\in S_\alpha$ by assumption.
\end{proof}

We now combine Theorem~\ref{thm:qc} with \eqref{eq:sector} in Lemma \ref{lem:ss} to obtain the following result.

\begin{corollary}
Suppose that a holomorphic function $f:\D\to\D$ with $f(0)=0, f'(0)>0$ is starlike of
order $\alpha$ with $\frac12<\alpha<1.$
Then its nonlinear resolvent $\J_r:\D\to\D$ extends to a
$k$-quasiconformal mapping of $\C$ for every $r\ge 0,$
where $k=\sin(\pi\alpha).$
\end{corollary}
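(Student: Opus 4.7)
The plan is to combine the sector estimate for starlike functions of order $\alpha\ge\tfrac12$ (from Lemma \ref{lem:ss}) with the quasiconformal extension result (Theorem \ref{thm:qc}), simply by matching the parameters appropriately.

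First I would verify that $f$ actually lies in the class $\N$ so that Theorem \ref{thm:qc} is applicable. This is immediate from Lemma \ref{lem:ss}: since $\alpha>\tfrac12$, we have $\mathcal{S}^*(\alpha)\subset\N$, and the hypothesis $f'(0)>0$ ensures that $f$ falls under the scope of the lemma. In particular $f$ is a holomorphic generator with $f(0)=0$ and $\Re\bigl[f(z)/z\bigr]>0$.

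Next I would extract the sector estimate. Still by Lemma \ref{lem:ss} (inequality \eqref{eq:sector}), the function $f$ satisfies
\begin{equation*}
\left|\arg\frac{f(z)}{z}\right|<(1-\alpha)\pi,\quad z\in\D.
\end{equation*}
Setting $\alpha'=2(1-\alpha)$, the assumption $\tfrac12<\alpha<1$ translates into $0<\alpha'<1$, and the above inequality becomes
\begin{equation*}
\left|\arg\frac{f(z)}{z}\right|<\frac{\pi\alpha'}{2},\quad z\in\D,
\end{equation*}
which is precisely the hypothesis \eqref{eq:sector2} of Theorem \ref{thm:qc} with parameter $\alpha'$.

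Finally I would invoke Theorem \ref{thm:qc} to conclude that, for every $r\ge 0$, the resolvent $\J_r$ admits a $k'$-quasiconformal extension to $\C$ with
\begin{equation*}
k'=\sin\frac{\pi\alpha'}{2}=\sin\bigl(\pi(1-\alpha)\bigr)=\sin(\pi\alpha),
\end{equation*}
using the identity $\sin(\pi-x)=\sin x$. This yields exactly the claimed constant $k=\sin(\pi\alpha)$. There is no real obstacle here: the corollary is a bookkeeping exercise, and the only point requiring mild care is the identification $2(1-\alpha)\in(0,1)$, which is what forces the restriction $\alpha>\tfrac12$ in the statement, together with the trigonometric identity that produces the clean final bound $\sin(\pi\alpha)$.
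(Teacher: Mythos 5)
Your proposal is correct and is exactly the argument the paper intends: the corollary is stated immediately after the sentence ``We now combine Theorem~\ref{thm:qc} with \eqref{eq:sector} in Lemma \ref{lem:ss}\dots'', and your parameter matching $\alpha'=2(1-\alpha)\in(0,1)$ together with $\sin\bigl(\pi(1-\alpha)\bigr)=\sin(\pi\alpha)$ is precisely the bookkeeping the authors leave to the reader. Nothing is missing.
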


\section{Boundary regular fixed points of resolvents}

For a holomorphic function $g$ on $\D $ and a point $\zeta \in \partial \D $ we write just $g( \zeta) $ for the angular limit $\angle\lim\limits_{z\to\zeta}g(z)$ of $g$ at the point $\zeta $ and $g'(\zeta)$ for its angular derivative $\angle\lim\limits_{z\to\zeta}\frac{g(z)-g(\zeta)}{z-\zeta},$ if they exist.
We recall that $\zeta\in\partial\D$ is a {\it boundary regular null point}
(respectively, {\it boundary regular fixed point})
of a function $g\in\hol(\D,\C)$ if $g(\zeta)=0$ (resp.~ $g(\zeta)=\zeta$) and
if the finite angular derivative $g'(\zeta)$ exists.
This definition also agrees with the fact that if $F$ is a holomorphic self-mapping of $\D $,
then the function $f(z)=z-F(z)$ is a generator on $\D $ (see~\cite{SD}).

In general, regarding continuous semigroups the following fact holds (see \cite{E-S-book}).

\begin{lemma}\label{lem:new1}
Let $\{F_{t}\}_{t\geq 0}$ be the semigroup generated by an $f\in \mathcal{G}(\D)$.
Then $f$ has a boundary regular null point at $\eta\in\partial\D$ if and only if $\eta$ is a boundary regular fixed point of every semigroup element $F_t,\ t\ge0,$ with
\begin{equation*}
\left( F_{t}\right)' (\eta)=\exp \left\{ -tf'\left(\eta \right) \right\}
\end{equation*}%
Furthermore, this point is the Denjoy-Wolff point of the semigroup  $\{F_t\}_{t\geq 0}$ if and only if  $f'( \eta) \geq 0$.
\end{lemma}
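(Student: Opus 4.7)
The plan is to extract everything from the semigroup ODE $\partial_t F_t(z)=-f(F_t(z))$ in \eqref{eq:DE} and then pull the resulting identities up to the boundary via angular limits. Differentiating in $z$ yields $\partial_t F_t'(z)=-f'(F_t(z))F_t'(z)$, and because $F_0'\equiv 1$ we obtain the interior formula
\begin{equation*}
F_t'(z)=\exp\left(-\int_0^t f'(F_s(z))\,ds\right),\qquad z\in\D,\ t\ge 0.
\end{equation*}
The first assertion of the lemma is then essentially the statement that one may pass to the angular limit $z\to\eta$ on both sides of this formula and in the ODE itself.

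For the forward direction, assume $\eta\in\partial\D$ is a boundary regular null point of $f$, so that $f(\eta)=0$ angularly and $f'(\eta)\in\R$ (the reality being a standard consequence of the Berkson--Porta representation and Julia--Wolff--Carath\'eodory). The key technical step is horodisk invariance for the semigroup: each $F_t$ sends the Julia horodisk $E(\eta,R)=\{z\in\D:|\eta-z|^2/(1-|z|^2)<R\}$ into $E(\eta,e^{-tf'(\eta)}R)$. To establish this I would differentiate the Julia quotient $\rho_s(z):=|\eta-F_s(z)|^2/(1-|F_s(z)|^2)$ in $s$, substitute $\partial_s F_s=-f\circ F_s$ from the ODE, and bound the resulting expression by means of the generator form of the Julia inequality at $\eta$, which is itself derivable from Theorem~C(ii) together with the angular regularity of $f$ at $\eta$. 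This yields a differential inequality that integrates to $\rho_t(z)\le e^{-tf'(\eta)}\rho_0(z)$. Julia--Wolff--Carath\'eodory then furnishes $F_t(\eta)=\eta$ with finite angular derivative; moreover, as $z\to\eta$ nontangentially, $F_s(z)\to\eta$ nontangentially uniformly for $s$ in compact subintervals of $[0,\infty)$. Dominated convergence applied to the interior derivative formula, together with the angular limit $f'(F_s(z))\to f'(\eta)$, produces $(F_t)'(\eta)=\exp(-tf'(\eta))$.

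For the converse, if $\eta$ is a boundary regular fixed point of each $F_t$, then $f(z)=\lim_{t\to 0^+}(z-F_t(z))/t$ together with the angular limit in $z$ yields $f(\eta)=0$ angularly. The semigroup identity $(F_{t+s})'(\eta)=(F_t)'(\eta)(F_s)'(\eta)$ makes $t\mapsto\log(F_t)'(\eta)$ additive and locally Lipschitz, hence linear, say equal to $ct$; the constant $c$ is identified as $-f'(\eta)$ by differentiating at $t=0$ via the ODE. For the Denjoy--Wolff characterization, $\eta$ is the DW point in the absence of an interior fixed point exactly when $(F_t)'(\eta)\le 1$ for each $t>0$; via the derivative formula just obtained, this is equivalent to $-tf'(\eta)\le 0$, i.e.\ to $f'(\eta)\ge 0$.

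The main obstacle is the horodisk invariance step. Boundary regularity of $f$ at $\eta$ is a pointwise angular statement, and turning it into a uniform estimate along the entire semigroup orbit requires exactly the right generator-form Julia inequality, with the sharp constant $f'(\eta)$ that will later populate the exponent. Once that estimate is in hand, everything else is a routine combination of the ODE identity, dominated convergence, and classical Julia--Wolff--Carath\'eodory.
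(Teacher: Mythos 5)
First, a point of comparison: the paper does not prove this lemma at all --- it is quoted as a known fact with a pointer to \cite{E-S-book} (see also \cite{SD2} and \cite{C-D-P}). So there is no in-paper argument to match your proposal against; what you have written is an outline of the standard proof from that literature. Your forward direction is structured correctly: the interior identity $F_t'(z)=\exp\bigl(-\int_0^t f'(F_s(z))\,ds\bigr)$, the differentiation of the Julia quotient $\rho_s(z)=|\eta-F_s(z)|^2/(1-|F_s(z)|^2)$ along the flow, and the infinitesimal Julia inequality
\begin{equation*}
\Re\left[\frac{2f(z)\overline{(\eta-z)}}{|\eta-z|^2}-\frac{2f(z)\bar z}{1-|z|^2}\right]\le -f'(\eta),\qquad z\in\D,
\end{equation*}
which integrates to $\rho_t\le e^{-tf'(\eta)}\rho_0$, are exactly the right engine, and you correctly flag this as the crux. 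Two technical points there still need care: dominated convergence on $f'(F_s(z))$ requires that the arcs $\{F_s(z):0\le s\le t\}$ stay in a fixed Stolz angle at $\eta$ as $z\to\eta$ nontangentially (horodisk invariance alone controls the Julia quotient, not the argument), and the horodisk estimate by itself only yields the one-sided bound $(F_t)'(\eta)\le e^{-tf'(\eta)}$; the reverse inequality needs a separate argument.

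The converse is where the genuine gaps are. (1) You deduce $f(\eta)=0$ by combining $f(z)=\lim_{t\to0^+}(z-F_t(z))/t$ with the angular limit in $z$; this is an unjustified interchange of two limits, and the inner limit $\lim_{z\to\eta}(z-F_t(z))/t=0$ for each fixed $t$ tells you nothing about the angular limit of $f$ without uniformity in $t$. (2) You identify the constant $c$ in $(F_t)'(\eta)=e^{ct}$ as $-f'(\eta)$ ``by differentiating at $t=0$ via the ODE,'' but the existence of the finite angular derivative $f'(\eta)$ is precisely what the converse must establish; as written the step is circular. The standard route (as in \cite{C-D-P} and \cite{E-S-book}) instead extracts from Julia's lemma applied to a single $F_{t_0}$ a finite bound on $\liminf_{r\to1^-}\Re\bigl[\,\cdot\,\bigr]$ in the characterization of boundary regular null points of generators, and only then recovers $f(\eta)=0$ and $f'(\eta)=-c$. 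Also, your claim that $t\mapsto\log(F_t)'(\eta)$ is ``locally Lipschitz'' is asserted without proof; what one actually has cheaply is measurability (it is an infimum over $z$ of functions continuous in $t$), which together with additivity already gives linearity. The Denjoy--Wolff equivalence at the end is fine once the derivative formula is in hand.
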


In the latter case this point is also \textit{the Denjoy-Wolff point of}  $\mathcal{J}_r $ for every  $r>0$.  So, if  $f'(\eta) \geq 0$, then this point is a boundary regular fixed point for each element of both families $\{F_{t}\}_{t\geq 0}$ and $\{ {\mathcal{J}}_{r}\} _{r\geq 0}$.  However, if $f'(\zeta)<0,$ the situation is completely different (and in a sense even surprised). In this section we study the behavior of the elements of the resolvent family at a boundary
regular null point of $f\in {\mathcal{N}}$.

\begin{theorem}
\label{fixed} Let $\mathcal{J}_{r}$ be the resolvent
for a function $f\in\N$, and let $\zeta \in \partial \D $. Then $\zeta $ is a boundary
regular fixed point of $\mathcal{J}_{r}$ if and only if it is a boundary
regular null point of $f$ and $r<1/|f^{\prime }(\zeta )|<+\infty$.
Moreover, in this case, $f'(\zeta)$ is a negative real number and
$$
\J_r'(\zeta)=\frac{1}{1+rf^{\prime}(\zeta )}.
$$
\end{theorem}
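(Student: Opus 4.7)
Both directions hinge on the resolvent identity $w=z+rf(z)$ (with $z=\J_r(w)$) together with the Julia--Carath\'eodory expansion $f(z)=f'(\zeta)(z-\zeta)+o(|z-\zeta|)$ along non-tangential approach to a boundary regular null point. A preliminary observation is that, since $f\in\N$ has $0$ as its Denjoy--Wolff point, Lemma~\ref{lem:new1} forces any boundary regular null point $\zeta\in\partial\D$ of $f$ to satisfy $f'(\zeta)<0$ ($f'(\zeta)\ge 0$ would make $\zeta$ a second Denjoy--Wolff point).

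\textit{Necessity.} Assume $\zeta$ is a boundary regular fixed point of the self-map $\J_r\in\hol(\D)$. By Julia--Wolff--Carath\'eodory, $z=\J_r(w)\to\zeta$ non-tangentially as $w\to\zeta$ non-tangentially. Passing to the angular limit in $rf(z)=w-z$ yields $f(\zeta)=0$, and rewriting the resolvent equation as
\[
\frac{f(z)}{z-\zeta}=\frac1r\left(\frac{w-\zeta}{z-\zeta}-1\right)
\]
shows the right-hand side tends to $\frac1r\bigl(1/\J_r'(\zeta)-1\bigr)$, which is finite. Hence $f$ has a finite angular derivative $f'(\zeta)$ at $\zeta$ and
\[
\J_r'(\zeta)=\frac1{1+rf'(\zeta)}.
\]
Since $\J_r'(\zeta)\in(0,\infty)$ by Julia's lemma for disk self-maps, $1+rf'(\zeta)>0$, which in view of $f'(\zeta)<0$ amounts to $r<1/|f'(\zeta)|$.

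\textit{Sufficiency.} Given a boundary regular null point $\zeta$ of $f$ with $r<1/|f'(\zeta)|<\infty$, I invoke Julia's lemma: it is enough to exhibit a sequence $w_n\to\zeta$ in $\D$ for which $(1-|\J_r(w_n)|)/(1-|w_n|)$ stays bounded. I build such a sequence in reverse. Put $z_n=(1-\varepsilon_n)\zeta$ with $\varepsilon_n\downarrow 0$ and let $w_n:=G_r(z_n)$, where $G_r(z)=z+rf(z)$. The Julia--Carath\'eodory expansion gives
\[
G_r(z_n)-\zeta=\bigl(1+rf'(\zeta)+o(1)\bigr)(z_n-\zeta),
\]
and $0<1+rf'(\zeta)<1$ ensures that $w_n\in\D$ for large $n$ with $w_n$ approaching $\zeta$ radially in the leading order; consequently $z_n\in\J_r(\D)$ and $\J_r(w_n)=z_n$. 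The non-tangential comparison $1-|w|\asymp|w-\zeta|$ then yields
\[
\frac{1-|\J_r(w_n)|}{1-|w_n|}=\frac{1-|z_n|}{1-|G_r(z_n)|}\;\longrightarrow\;\frac1{1+rf'(\zeta)}<\infty.
\]
Julia's lemma now delivers the boundary regular fixed point of $\J_r$ at $\zeta$ with angular derivative $\J_r'(\zeta)=1/(1+rf'(\zeta))$.

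The main technical obstacle lies in the sufficiency direction: one must verify carefully that $G_r$ maps a suitable (radial or Stolz) approach region at $\zeta$ into $\D$ and that its image is itself non-tangential to $\zeta$, so that the Euclidean distance $|\,\cdot\,-\zeta|$ is comparable with $1-|\,\cdot\,|$ on both sides of the quotient. Both properties flow from the strict bound $0<1+rf'(\zeta)<1$ for the leading scaling factor, but controlling the remainder $o(|z-\zeta|)$ uniformly along the chosen approach requires the full non-tangential Julia--Carath\'eodory estimate rather than just a radial limit.
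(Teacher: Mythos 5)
Your overall strategy is genuinely different from the paper's in both directions, and the sufficiency half is essentially sound and arguably cleaner. The paper proves sufficiency by first establishing the \emph{unrestricted} limit $\J_r(w)\to\zeta$ via a horodisk/disk-intersection argument based on the Julia inequality for the associated self-map $F$ (where $f(z)=z\frac{1-F(z)}{1+F(z)}$), and then computes the angular derivative by applying the Lindel\"of theorem along the curve $\J_r^{-1}([0,1))$, which requires knowing that $[0,1)\subset\J_r(\D)$ (a consequence of strict starlikeness of order $\frac12$). Your route — construct $w_n=z_n+rf(z_n)$ with $z_n=(1-\varepsilon_n)\zeta$, note that uniqueness of the resolvent solution (Theorem A) forces $\J_r(w_n)=z_n$, and feed the bounded quotient $(1-|z_n|)/(1-|w_n|)$ into the Julia--Wolff--Carath\'eodory theorem — avoids both of those ingredients. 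Two small points to tighten: Julia--Wolff--Carath\'eodory gives $\J_r'(\zeta)=\liminf_{w\to\zeta}(1-|\J_r(w)|)/(1-|w|)\le 1/(1+rf'(\zeta))$ from your sequence, so the exact value of the angular derivative still needs the resolvent-identity computation after existence is secured; and you should say explicitly that $\J_r(w_n)=z_n$ follows from uniqueness in Theorem~A rather than from the expansion of $G_r$.

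The necessity direction, however, has a genuine gap. From $\frac{f(z)}{z-\zeta}=\frac1r\bigl(\frac{w-\zeta}{z-\zeta}-1\bigr)$ with $z=\J_r(w)$ you obtain a finite limit of $f(z)/(z-\zeta)$ only as $z$ ranges over $\J_r(S)$ for Stolz angles $S$ at $\zeta$. That set is contained in a Stolz angle but is not a full Stolz angle, so this does not yet show that $\zeta$ is a boundary \emph{regular} null point of $f$, which by definition requires the angular limits of $f$ and of $f(z)/(z-\zeta)$ over all non-tangential approaches. The usual remedy, the Lindel\"of theorem, applies to bounded or normal functions; $f(z)/(z-\zeta)=\frac{z}{z-\zeta}p(z)$ with $\Re p>0$ is neither obviously bounded nor obviously normal in a Stolz angle (both factors can blow up like $|z-\zeta|^{-1}$ there), so you cannot invoke it directly. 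The paper sidesteps exactly this issue: it works with the auxiliary generator $g(w)=w-\J_r(w)\in\N$ and with the self-map $F$ in the representation $f(z)=z\frac{1-F(z)}{1+F(z)}$, deduces via Lindel\"of (legitimately, since $F\in\hol(\D)$ is normal) that $F$ has a finite angular derivative at $\zeta$, and only then translates back to the statement that $\zeta$ is a boundary regular null point of $f$ with $r|f'(\zeta)|<1$. You need to insert such a reduction to a self-map (or to a Carath\'eodory-class function, for which a one-sequence Julia-type criterion is available) before you may assert that $f'(\zeta)$ exists as an angular derivative.
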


\begin{proof}
Note first that the resolvent of
the rotation $R_\theta f(z)=e^{-i\theta}f(e^{i\theta}z)$ of $f(z)$ by angle $\theta$
is given as the rotation $R_\theta \J_r$ of the resolvent $\J_r$ of $f$ by angle $\theta.$
Note also that $(R_\theta f)'(z)=f'(e^{i\theta}z).$
Therefore, without loss of generality, one can assume that $\zeta =1$ by a suitable rotation
if necessary.
Since $\Re[f(z)/z]>0,$ we can express $f$ in the form
$$
f(z)=z\frac{1-F(z)}{1+F(z)},\quad z\in\D,
$$
for some $F\in\hol(\D).$

Suppose now that $\zeta=1$ is a boundary regular null point of $f(z)$ and that $r<1/\alpha,$ where $\alpha=|f'(1)|.$
Since the origin is the Denjoy-Wolff point of the generated semigroup, by Lemma~\ref{lem:new1}, $f'(1)$ is a negative real number so that $f(1)=0$ and
$\alpha=-f'(1).$

We also obtain $F(1)=1$ and $F^{\prime }(1)=2\alpha $.
By the Julia-Carath\'eodory theorem we further see that
\begin{equation}
\operatorname{Re}\frac{1+F(z)}{1-F(z)}\geq
\frac{1}{2\alpha }\operatorname{Re}\frac{1+z}{1-z}%
\quad \mbox{for all}\quad z\in \D .  \label{Julia}
\end{equation}

Using this notation, we can rewrite the resolvent equation in the form
\begin{equation}
\frac{1-z}{1-w}\left[ 1-r\frac{z(1-F(z))}{(1-z)(1+F(z))}\right] =1,
\label{resol}
\end{equation}%
where $z=\mathcal{J}_{r}(w)$. First we show that $w=1$ is a boundary fixed
point of $\mathcal{J}_{r}$. Let $\{w_{n}\}$ be a sequence in $\D$
converging to $1$. Denote $z_{n}:=\mathcal{J}_{r}(w_{n})$. Taking a
subsequence (if needed), we can assume that $z_{n}\rightarrow z_{0}$
for some point $z_0$ with $|z_{0}|\leq 1$.
Suppose (in the contrary) that $z_{0}\not=1$. Substituting $%
w=w_{n}$ into \eqref{resol}, we see that
$$
\frac{1+F(z_{n})}{1-F(z_{n})}\rightarrow \frac{rz_0}{1-z_{0}}.
$$
Therefore, letting $z=z_n\to z_0$ in \eqref{Julia}, we obtain
\begin{equation}\label{eq:z0}
(1-2r\alpha )|z_{0}|^{2}+2r\alpha \operatorname{Re}z_{0}\geq 1.
\end{equation}%
Note here that $\Re z_0\le |z_0|$ and that $|z_0|^2\le |z_0|$ because $|z_0|\le 1.$
When $1-2r\alpha\ge0,$ the inequality \eqref{eq:z0} implies that
$1\le (1-2r\alpha)|z_0|+2r\alpha|z_0|=|z_0|.$
Since $|z_0|\le 1,$ equality must hold and therefore $\Re z_0=|z_0|=1,$
which implies $z_0=1;$ a contradiction.
When $1-2r\alpha<0,$ \eqref{eq:z0} is equivalent to the condition that
$z_0$ is contained in the closed disk
$$
\left\vert z-\frac{r\alpha }{2r\alpha -1}\right\vert \leq
\dfrac{1-r\alpha}{2r\alpha -1}.
$$
It is easy to observe that
this disk intersects the closed unit disk $|z|\le 1$
only at the point $z=1.$
We have the contradiction $z_0=1$ at this time, too.
Hence we now conclude that $z_n=\J_r(w_n)$ tends to $1,$
which means that $\J_r(w)$ has the (unrestricted) limit $1$ as $w\to 1.$
This implies, \textit{inter alia}, that $1\in \partial \mathcal{J}_{r}(\D )$.
We now observe that $\mathcal{J}_{r}$ is strictly starlike; in other words,
the boundary of  the image $\J_r(\D)$ does not contain any line segment contained in a ray
emanating from the origin, because it is starlike of order $1/2.$
In particular, the segment $[0,1)$ is contained in $\mathcal{J}_{r}(\D )$,
which will be used in the sequel.

Now we prove that the boundary fixed point $%
w=1$ of $\J_r$ is regular. To this end, it is enough to show that the function
$$
g(w)=\frac{\mathcal{J}_{r}(w)-1}{w-1}
$$
has a non-vanishing finite limit (the angular derivative of $\mathcal{J}_r$)
as $w$ approaches $1$ non-tangentially.
We first observe that $g(z)$ is bounded away from $-1,$ namely,
$1/(g(z)+1)$ is bounded, in any non-tangential approach region
of the form $|\arg (1-w)|\leq c$ for some $c<\frac{\pi }{2}$. Indeed, since $%
\mathcal{J}_{r}$ is a self-mapping of the disk, $|\arg (1-\mathcal{J}%
_{r}(w))|<\frac{\pi }{2}.$ Consequently, $|\arg g(w)|<\frac{\pi }{2}+c<\pi ,$
so the claim follows.
This enables us to apply the Lindel\"of theorem to the function $1/(g(w)+1)$
to guarantee that, if it has a limit, $say A,$ along a curve $\gamma$ ending at $1$
in $\D$ then it has a non-tangential limit $A$ at $w=1$
(see \cite[Theorem 1.6]{E-S-book}).
To this end, we consider the curve $\gamma :=(\mathcal{J}_{r})^{-1}([0,1))$ in $%
\D $ ending at $1$ (see \cite[Proposition 2.14]{Pom}).
By \eqref{resol}, we get the expression
\begin{equation*}
\frac1{g(w)}=1-r\frac{z(1-F(z))}{(1-z)(1+F(z))}, \quad z=\J_r(w).
\end{equation*}%
If $w\rightarrow 1$ along the curve $\gamma $, then $\big(1-F(z)\big)/\big(1-z\big)
\to F'(1)=2\alpha $ and hence $g(w)\to 1/(1-r\alpha)>1$.
we see that the angular limit of $g(w)$ at $w=1$
is $1/(1-r\alpha).$
Thus we have shown that $\zeta =1$ is a boundary regular fixed point of $\mathcal{J}_{r}$.

Conversely, assume that $\zeta =1$ is a boundary regular fixed point of the
function $z=\mathcal{J}_{r}(w).$
Consider the auxiliary function $g$ defined by $g(w)=w-\mathcal{J}_{r}(w)$.
Then the Schwarz lemma and the Berkson-Porta formula  in Theorem C
imply that $g\in\mathcal{N}$.
In addition, $g(1)=0$ and $g'(1)=1-\mathcal{J}_{r}'(1)$.
Therefore, by  Lemma~\ref{lem:new1}, $g'(1)<0$, that is,
$\beta :=\mathcal{J}_{r}^{\prime }(1)>1$.
Then the resolvent equation \eqref{resol} implies
that if $w$ tends to $1$ along any non-tangential path, then
\begin{equation*}
\frac{1-F(\mathcal{J}_{r}(w))}{1-\mathcal{J}_{r}(w)}\rightarrow \frac{2}{r}%
\left( 1-\frac{1}{\beta }\right).
\end{equation*}%
Using the Lindel\"of theorem again, we observe that $F(w)\to 1$ and
the angular derivative of $F$ at $1$ is $2(1-1/\beta)/r.$
This implies that $w=1$ is a boundary regular
null point of $f$ \ with $f^{\prime }(1)=(1-1/\beta)/r.$
Hence $r|f^{\prime }(1)|=-rf'(1)<1$. This completes the proof.
\end{proof}

\begin{remark}
We saw in the proof that if $\alpha =-f^{\prime }(1)>0$ and $r\alpha =1$
then $w=1$ is a boundary fixed point of $\mathcal{J}_{r}$ but it is not
regular.
\end{remark}

\begin{example}\label{examp2}
 Consider now the semigroup generator $f(z)= z(1-z)$.
It has the boundary regular null point at $z=1$ with $f'(1)=-1$.
Solving the equation
$$
  z+rf(z)=z\big(1+r(1-z)\big)=w
$$
in $z,$ we find its resolvent
$$
\mathcal{J}_r(w)=\frac{r+1-\sqrt{(r+1)^2-4rw}}{2r}\,.
$$
Consider $\angle\lim\limits_{w\to1}\mathcal{J}_r(w)=\frac{r+1-\sqrt{(r-1)^2}}{2r}\,.$ Clearly, if $r\le1$, then this limit equals $1$; and otherwise, it equals $\frac1r$. So, $w=1$ is the boundary fixed point  of $\mathcal{J}_r$ if and only if $r\le1$. Moreover, if $r<1$, then
  \[
  \angle\lim_{w\to1}\frac{\mathcal{J}_r(w)-1}{w-1}=\angle\lim_{w\to1}\frac{1-r-\sqrt{(r+1)^2-4rw}}{2r(w-1)}=\frac1{1-r}\,.
  \]
  If $r=1$, then
  \[
  \angle\lim_{w\to1}\frac{\mathcal{J}_1(w)-1}{w-1}=\angle\lim_{w\to1}\frac{1}{\sqrt{1-w}}=\infty\,.
  \]
Three typical situations that occur in this example  are demonstrated in Fig.~\ref{fig1}.

\begin{figure}\centering
    \includegraphics[angle=0,width=4.8cm,totalheight=4.8cm]{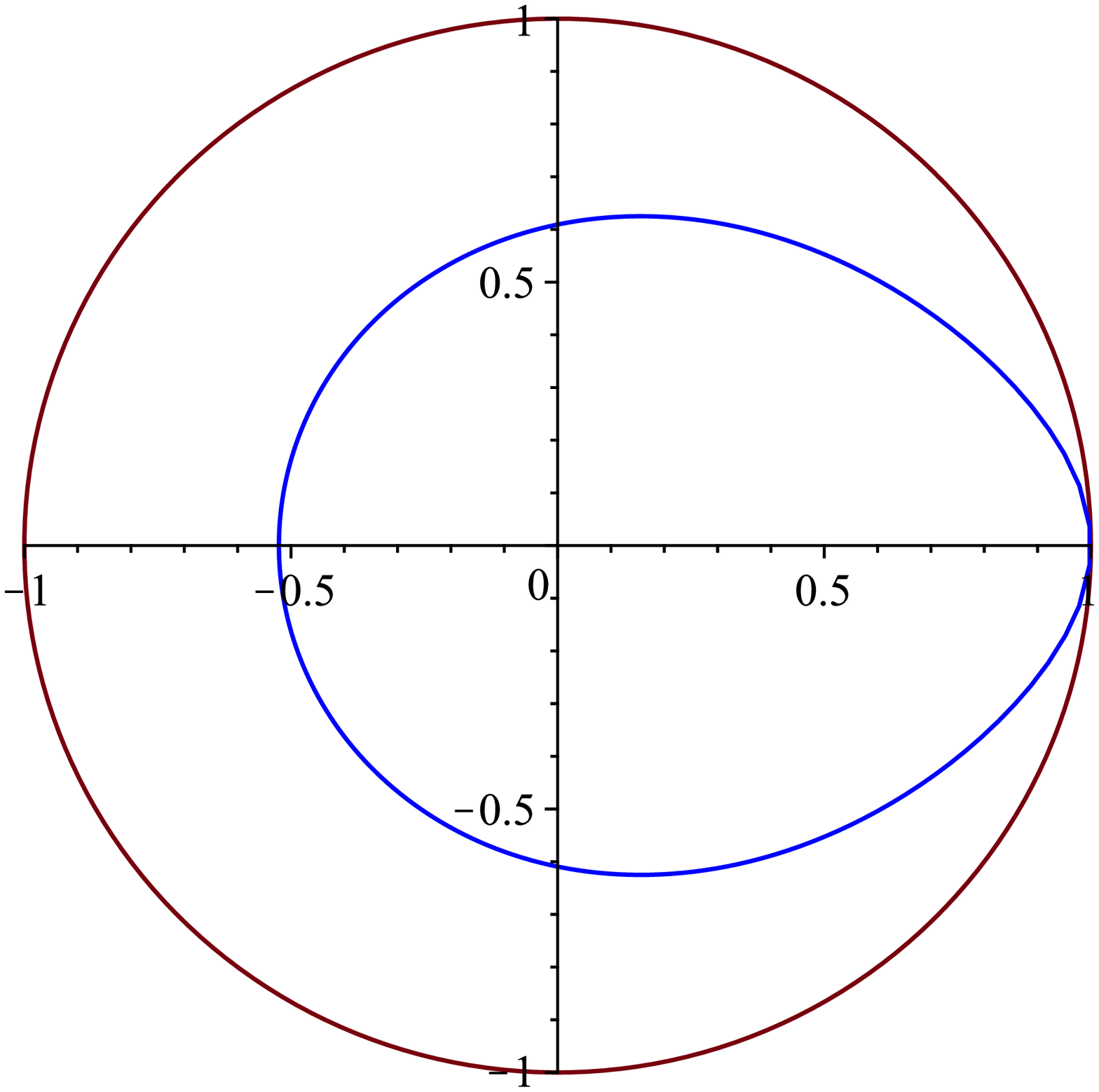}
    \includegraphics[angle=0,width=4.8cm,totalheight=4.8cm]{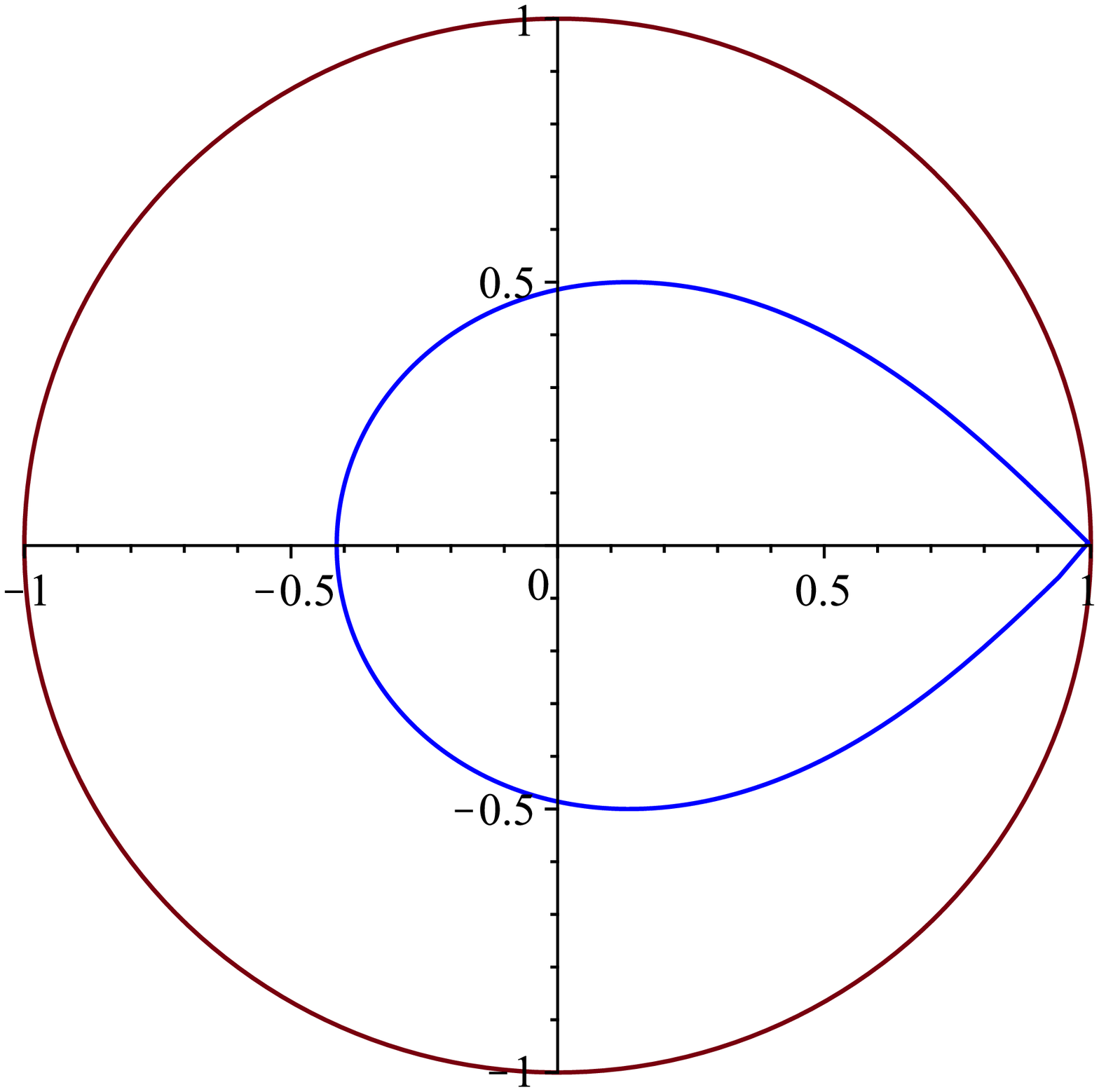}
    \includegraphics[angle=0,width=4.8cm,totalheight=4.8cm]{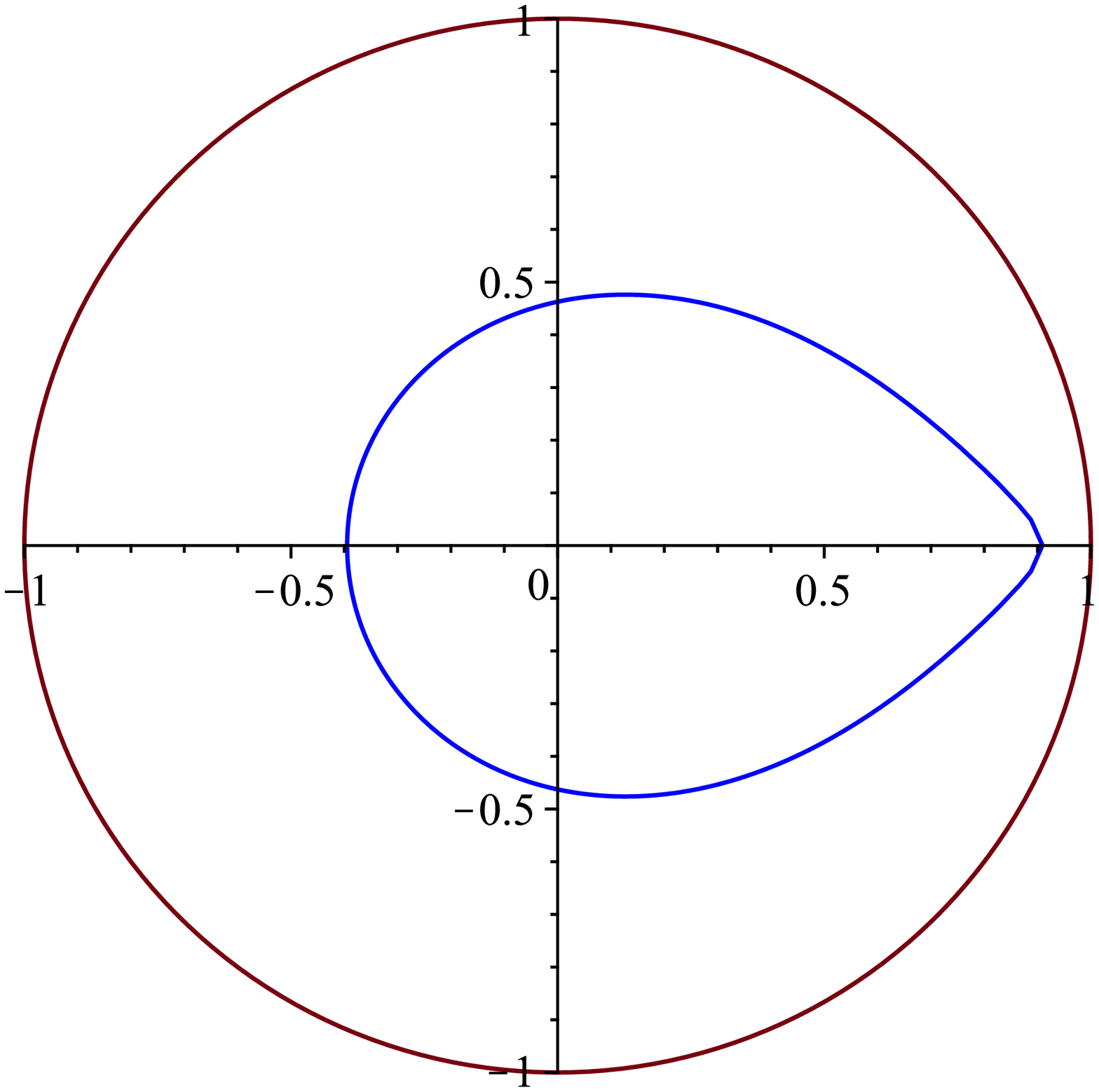}

    \caption{The images $\J_r(\D)$ for
$r=0.6<1,$ for $r=1$ and for $r=1.1>1$, respectively.}\label{fig1}
\end{figure}
\end{example}

To proceed we quote partially the result proved in \cite{E-S-Z} (see also \cite{E-S-book}).

\begin{lemma}
\label{bfid}A function $f\in {\mathcal{N}}$ has a boundary regular null point $\zeta \in \partial \D $ if and only if there is a simply connected domain $\Omega \subset \D $ such that $f$ generates a one-parameter group $S=\{F_{t}\}_{-\infty <t<\infty }$ of hyperbolic automorphisms on $\Omega $ such that the points $z=0$ and $z=\zeta $ belong to $\partial \Omega $ and are boundary regular fixed points of $S$ on $\partial \Omega $. Moreover, $f'(\zeta ) $ is a real negative number.
\end{lemma}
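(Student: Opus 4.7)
The plan is to prove both implications of the equivalence. I would begin with the forward direction: assume $\zeta \in \partial\D$ is a boundary regular null point of $f \in \mathcal{N}$. Since $f \in \mathcal{N}$, the origin is the Denjoy-Wolff point of the generated semigroup $\{F_t\}_{t\ge0}$, so $\zeta \neq 0$ is not the Denjoy-Wolff point; by Lemma \ref{lem:new1} this immediately forces $f'(\zeta) < 0$, and the Julia-Carath\'eodory theorem (applied, for instance, to the self-map $F_1(z) = z - f(z) \cdot (\text{unit}) + \ldots$, or more cleanly to any single element $F_{t_0}$, for which $\zeta$ is then a boundary regular fixed point by Lemma \ref{lem:new1}) ensures this angular derivative is real. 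That settles the ``moreover'' claim.

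The substantive step is constructing the invariant simply connected domain $\Omega$. Set $\lambda = -f'(\zeta) > 0$. I would build a Koenigs linearization at the repelling boundary fixed point $\zeta$: near $\zeta$ the maps $F_t$ are expanding with multiplier $e^{\lambda t} > 1$, so a standard fixed-point/normal-family argument produces a local univalent map $\sigma$, defined on a petal-like region $U \subset \D$ with $\zeta \in \partial U$, solving the Schr\"oder equation $\sigma(F_t(z)) = e^{\lambda t}\sigma(z)$ for small $t \ge 0$, with $\sigma(U)$ a half-plane. Then I would extend $\sigma$ globally by pulling back along the semigroup: define
\begin{equation*}
\Omega = \bigcup_{t \ge 0} F_{t}\big(U\big)
\end{equation*}
and extend $\sigma$ to $\Omega$ via $\sigma(z) = e^{-\lambda t}\sigma(F_t(z))$ for $t$ large enough that $F_t(z) \in U$. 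This extended $\sigma$ is univalent on $\Omega$ onto a half-plane (since the Schr\"oder conjugacy is a genuine conformal conjugation to multiplication by $e^{\lambda t}$), so $\Omega$ is simply connected. The fact that $F_t(z) \to 0$ for $z \in \Omega$ puts $0 \in \partial\Omega$; the local petal puts $\zeta \in \partial\Omega$; and the backward extension $F_{-t} := \sigma^{-1}\circ(e^{-\lambda t}\sigma(\cdot))$ is well defined on $\Omega$ and produces the one-parameter group. Since on $\Omega$ the group is conjugate to $w \mapsto e^{\lambda t}w$ on the half-plane, it consists of hyperbolic automorphisms with boundary fixed points corresponding to $0$ and $\zeta$.

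For the converse, given the simply connected $\Omega \subset \D$ with $0, \zeta \in \partial \Omega$ and a group $\{F_t\}_{t\in\R}$ of hyperbolic automorphisms on $\Omega$ with these two boundary regular fixed points, I would take a Riemann map $\psi:\mathbb{H}\to\Omega$ with boundary values $\psi(0)=0$ and $\psi(\infty)=\zeta$ (possible because $0$ and $\zeta$ correspond to prime ends of $\Omega$). The hyperbolic group conjugates via $\psi$ to $w \mapsto e^{\mu t}w$ on $\mathbb{H}$ for some real $\mu > 0$, whence $F_t(z) = \psi(e^{\mu t}\psi^{-1}(z))$ on $\Omega$. Since $F_t$ already extends holomorphically to all of $\D$ (being a semigroup element), and since the semigroup ODE determines $f$ from $F_t$, differentiating at $t = 0$ gives $f(\zeta)=0$ with $f'(\zeta)=-\mu$; the regularity of the boundary fixed points of the group translates (through standard angular-derivative estimates applied to $\psi$ at the boundary prime end corresponding to $\infty$) into the existence of the finite angular derivative of $f$ at $\zeta$ in $\D$, establishing that $\zeta$ is a boundary regular null point of~$f$.

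The principal obstacle is the second step above: showing that the purely local Koenigs linearization at the repelling point $\zeta$ indeed extends to a univalent map on a domain $\Omega$ whose image is a full half-plane, so that the backward iterates $F_{-t}$ remain inside~$\D$. A second delicate point is passing, in the converse direction, from angular derivatives taken along sequences inside $\Omega$ to genuine angular derivatives inside $\D$ at~$\zeta$; this is where one needs that $\Omega$ touches $\partial\D$ non-tangentially at $\zeta$, a fact that should be extracted from the conformality properties of~$\psi$ at the corresponding prime end.
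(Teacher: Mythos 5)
First, a point of comparison: the paper does not prove this lemma at all --- it is quoted from \cite{E-S-Z} (see also \cite{E-S-book}) --- so there is no in-paper argument to measure yours against; what follows assesses the proposal on its own. Your overall strategy (linearize the flow at the repelling boundary point $\zeta$, saturate the resulting petal under the forward flow to obtain $\Omega$, conjugate the group to $w\mapsto e^{\lambda t}w$ on a half-plane; conversely, read off $f'(\zeta)$ from a Riemann map of $\Omega$) is indeed the route taken in the literature, and your derivation of the ``moreover'' clause is sound: $(F_t)'(\zeta)=e^{-tf'(\zeta)}>0$ for all $t$ forces $f'(\zeta)\in\R$, and $f'(\zeta)<0$ follows from Lemma~\ref{lem:new1} because the Denjoy--Wolff point of the semigroup of an $f\in\N$ is the origin.

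The proposal nonetheless has a genuine gap precisely where the content of the lemma lies, and you concede as much in your closing paragraph. The ``standard fixed-point/normal-family argument'' producing a univalent $\sigma$ on a petal $U$ with $\sigma\circ F_t=e^{\lambda t}\sigma$ is not available off the shelf: since $F_{-t}$ is not defined on $\D$, you cannot run a Koenigs-type backward iteration, and the existence of even one backward orbit converging to $\zeta$ non-tangentially with bounded hyperbolic step is itself a theorem. Its proof is exactly where the \emph{regularity} of the null point (finiteness of $f'(\zeta)$) must enter, via Julia's lemma and the invariance of horocycles at $\zeta$; your sketch never uses that hypothesis in the construction, which is a sign the argument as written cannot close (a non-regular boundary null point does not yield such a petal). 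There is also an internal inconsistency in the saturation step: if $\sigma(U)$ is already a full half-plane invariant under $w\mapsto e^{\lambda t}w$, then $U$ is already flow-invariant and $\bigcup_{t\ge0}F_t(U)=U$; if it is only a local sector, you must prove injectivity of the extended $\sigma$ on the union, which you do not address. Finally, in the converse direction, upgrading fixed-point data on $\Omega$ to a boundary \emph{regular} null point of $f$ on all of $\D$ needs a transfer principle (a boundary fixed point admitting a backward orbit with bounded hyperbolic step is automatically regular, plus the Lindel\"of theorem); you correctly flag this but do not supply it. In short: a correct road map with the two hardest steps left as acknowledged black boxes; to become a proof it must import them explicitly, e.g.\ from \cite{E-S-Z}.
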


It follows from Lemma~\ref{lem:new1} that $F_{t}'(0)=e^{-tf'( 0) }<1$ and $F_{t}'(\zeta )=e^{-tf^{\prime }\left( \zeta \right) }>1.$

We call such a domain \textit{backward flow invariant domain} (or shortly BFID). Note that in general a BFID $\Omega$ is not unique for a point $\zeta \in \partial\D$, but there is a unique BFID  $\Omega$ (called the maximal BFID) with the above properties such that $\Omega $ has a corner of opening $\pi $ at the point $\zeta $ (see \cite{Pom}).
Other characterizations of backward flow invariant domains can be found in \cite{E-S-Z, E-R-S-Y, E-S-book}.

An interesting phenomenon occurs when we consider the resolvent family only on BFID. Namely,

\begin{proposition}\label{propo2}
Let $f\in {\mathcal{N}}$ have a boundary regular null point $\zeta\in\partial\D$ and $\Omega$ is a BFID in $\D $ corresponding to $\zeta $. If \ $\Omega $ is convex, then the restriction of the resolvent family $\J_r$ on~$\Omega$ can be continuously extended in the parameter $r\in(-\infty,0)$ such that $\zeta$ is a boundary fixed point of $\mathcal{J}_r$ for every $r<0$. Moreover, $\lim\limits_{r\to -\infty }\mathcal{J}_r(w) =\zeta $  whenever $w\in \Omega .$
\end{proposition}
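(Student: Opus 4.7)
The strategy is to exploit that, on $\Omega$, the semigroup extends to a one-parameter \emph{group} $\{F_t\}_{t\in\R}$ of holomorphic automorphisms (Lemma~\ref{bfid}), so both $f$ and $-f$ belong to $\G(\Omega)$.

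\emph{Extension of $\J_r$ to $r\le 0$.} Since $\Omega$ is bounded and convex and $-f\in\G(\Omega)$, Theorem~A applied to $-f$ on $\Omega$ shows that, for every $s\ge 0$ and every $w\in\Omega$, the equation $z-sf(z)=w$ has a unique solution $z\in\Omega$ depending holomorphically on $w$. Setting $r=-s$, this defines $\J_r(w):=(I+rf)^{-1}(w)\in\Omega$ for $w\in\Omega$ and $r\le 0$, extending the original definition and agreeing with $\J_0=I$ at $r=0$. Differentiating $z+rf(z)=w$ as in \eqref{eq:J'} gives $\J_r'(w)=1/(1+rf'(\J_r(w)))$ with non-vanishing denominator; the implicit function theorem then yields joint continuity (in fact, real-analyticity in $r$ and holomorphicity in $w$) of $(r,w)\mapsto \J_r(w)$ on $(-\infty,0]\times\Omega$, which gives the continuous extension in the parameter $r$.

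\emph{Behaviour at $\zeta$.} The reversed family $\{F_{-t}\}_{t\ge 0}$ is a continuous semigroup on $\Omega$ with infinitesimal generator $-f$. By Lemma~\ref{bfid}, $\zeta$ is a boundary regular null point of $f$ with $f'(\zeta)<0$, so $(-f)'(\zeta)>0$. Transferring to the unit disk via a Riemann map $\varphi\colon\D\to\Omega$ that carries a chosen prime end to $\zeta$, the pulled-back semigroup on $\D$ has $\varphi^{-1}(\zeta)\in\partial\D$ as a boundary regular fixed point with non-negative derivative, hence as its Denjoy-Wolff point. The implicit Denjoy-Wolff assertions \eqref{implicit} and \eqref{imp2} (transferred back to $\Omega$) then yield, for every $w\in\Omega$ and every $s>0$,
$$
\lim_{w\to\zeta,\ w\in\Omega}(I-sf)^{-1}(w)=\zeta \qquad\text{and}\qquad \lim_{s\to+\infty}(I-sf)^{-1}(w)=\zeta.
$$
Writing $s=-r$, the first equality says that $\zeta$ is a boundary fixed point of $\J_r$ for every $r<0$, and the second says $\lim_{r\to-\infty}\J_r(w)=\zeta$ for every $w\in\Omega$.

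\emph{Main obstacle.} The principal technical step is the conformal transfer of the Berkson--Porta and implicit Denjoy--Wolff theory from $\D$ to the bounded convex domain $\Omega$: one must verify that the prime end of $\Omega$ at $\zeta$ (which, for the maximal BFID, corresponds to a corner of opening $\pi$) pulls back to a boundary regular null point of the transferred generator on $\D$ with non-negative derivative, and that the corresponding angular/unrestricted limits persist under the transfer. Once that is in place, the proposition reduces to Theorem~A applied to $-f$ on the convex domain $\Omega$ together with the standard disk Denjoy--Wolff arguments.
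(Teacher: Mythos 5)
Your proof follows essentially the same route as the paper's: set $g=-f$ and $s=-r$, invoke Theorem~A on the bounded convex domain $\Omega$ (where $-f$ is a generator because $f$ generates a group of automorphisms there by Lemma~\ref{bfid}) to define $\J_r$ for $r\le 0$, and then use that $(-f)'(\zeta)>0$ makes $\zeta$ the Denjoy--Wolff point of the reversed flow to obtain both the boundary fixed point and the limit as $r\to-\infty$. The only differences are bookkeeping: the paper cites \cite[Proposition 3.3.2]{SD} for the final limit where you invoke the implicit Denjoy--Wolff formula \eqref{implicit}, and you spell out the conformal transfer to $\D$ and the continuity in $r$ that the paper leaves implicit.
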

\begin{proof}
Set $g=-f$ and $s=-r >0$. Then equation \eqref{G*}  becomes
\begin{equation*}
z+sg\left( z\right) =w
\end{equation*}

Since $\Omega $ is convex, it follows from Theorem \ref{teor GA} that for each $w\in \Omega$ and each $s\geq 0$ the latter equation has a unique solution $z=z_{s}\left( w\right) \in \Omega $,
which can be considered as an extension of $\mathcal{J}_{r}\left( w\right) $ for $r=-s\leq 0$.

Furthermore, since $g'(\zeta)> 0$, $\zeta$ is the Denjoy-Wolff point for the family $\left\{ z_s(w)\right\} _{s\geq 0}$. Thus the first assertion follows. Using \cite[Proposition 3.3.2]{SD}, we complete our proof.
\end{proof}
In addition, comparison of Theorem~\ref{fixed} with the last proof shows that the point $w=0$ is the boundary regular fixed point of the restriction of $\mathcal{J}_r$ on $\Omega$ whenever  $r\in\left(-\frac1{f'(0)},0\right)$  with $\J_r'(0)=\frac{1}{1+rf'(0 )}.$

To illustrate Proposition~\ref{propo2} and the last fact, return now to the semigroup generator $f(z)= z(1-z)$ and its resolvent
$$
\mathcal{J}_r(w)=\frac{r+1-\sqrt{(r+1)^2-4rw}}{2r}
$$
that were considered in Example~\ref{examp2}.  It can be easily seen that $\angle \lim\limits_{w\to1 }\mathcal{J}_r(w) =1$ for every $r<0$.  Moreover, $\mathcal{J}_r'(1)=\frac1{1-r}$ for every $r\in(-\infty,1)$  which completes the calculation in the above example. In addition, $\mathcal{J}_r(0)=0$ if $r\ge-1$ and   $\mathcal{J}_r'(0)=\frac1{1+r}$ for $r>-1$.  Using results from \cite{E-S-Z}, one can find that the maximal BFID corresponding to $\zeta=1$ is $\Omega=\left\{z:\left|z-\frac12\right|<\frac12\right\}$ (see Fig.~\ref{fig2} and compare it with Fig.~\ref{fig1}).

\begin{figure}\centering
    \includegraphics[angle=0,width=4.8cm,totalheight=4.8cm]{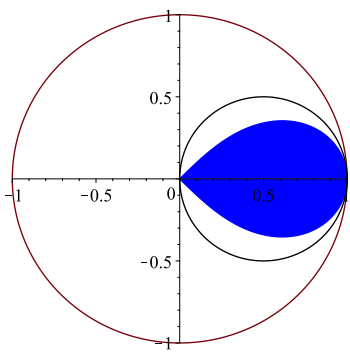}
    \caption{The maximal BFID $\Omega$ and its image $\mathcal{J}_{-1}(\Omega)$ (filled domain).}\label{fig2}
\end{figure}


\end{document}